\numberwithin{equation}{section}
\numberwithin{theorem}{section}
\numberwithin{problem}{section}
\numberwithin{lemma}{section}
\numberwithin{corollary}{section}
\numberwithin{remark}{section}
\numberwithin{example}{section}
\numberwithin{proposition}{section}
\newtheorem{algorithm}{Algorithm}
\numberwithin{algorithm}{section}
\let\c@proposition\c@theorem
\let\c@corollary\c@theorem
\let\c@remark\c@theorem
\let\c@problem\c@theorem
\let\c@lemma\c@theorem
\let\c@definition\c@theorem
\let\c@example\c@theorem
\newcommand{\ra}[1]{\renewcommand{\arraystretch}{#1}}
\newcommand{\R}{\mathbb R}
\newcommand{\dx}{\mbox{${\rm\,d}x$}}
\newcommand{\Ai}{{\rm Ai}}
\newcommand{\Bi}{{\rm Bi}}
\begin{document}

\title{An iterative approximate method of solving boundary value problems using dual Bernstein polynomials}

\author{Przemys{\l}aw Gospodarczyk        \and
        Pawe{\l} Wo\'{z}ny
}

\institute{P. Gospodarczyk (Corresponding author) \at Institute of Computer Science, University of Wroc{\l}aw,
              ul.~F.~Joliot-Curie 15, 50-383 Wroc{\l}aw, Poland\\
              Fax: +48713757801\\
              \email pgo@ii.uni.wroc.pl
           \and P. Wo\'{z}ny \at Institute of Computer Science, University of Wroc{\l}aw,
                ul.~F.~Joliot-Curie 15, 50-383 Wroc{\l}aw, Poland\\
                \email Pawel.Wozny@cs.uni.wroc.pl
}

\date{\today}

\maketitle

\begin{abstract}
In this paper, we present a new iterative approximate method of solving boundary value problems. The idea is to compute approximate polynomial
solutions in the Bernstein form using least squares approximation combined with some properties of dual Bernstein polynomials which guarantee
high efficiency of our approach. The method can deal with both linear and nonlinear differential equations. Moreover, not only second order
differential equations can be solved but also higher order differential equations. Illustrative examples confirm the versatility of our method.

\keywords{boundary value problem \and linear differential equation \and nonlinear differential equation \and
high order differential equation \and Bernstein polynomials \and dual Bernstein polynomials \and least squares approximation}

\end{abstract}

\section{Introduction}\label{Sec:Intro}

In the paper, we consider the following form of the boundary value problem.

\begin{problem} \,[\textsf{Boundary value problem}]\label{P:BVP}
Solve the $m$th order differential equation
\begin{equation}\label{E:diff}
y^{(m)}(x) = f\left(x,y(x),y'(x),\ldots,y^{(m-1)}(x)\right) \qquad (0 \le x \le 1)
\end{equation}
with the boundary conditions
\begin{align}
& y^{(i)}(0) = a_i \qquad (i=0,1,\ldots,k-1),\label{E:gencond1}\\
& y^{(j)}(1)=b_j \qquad (j=0,1,\ldots,l-1),\label{E:gencond2}
\end{align}
where $a_i, b_j \in \R$, and $k+l=m$. The differential equation \eqref{E:diff} may be nonlinear.
\end{problem}

The goal of the paper is to present a new iterative approximate method of solving Problem \ref{P:BVP}. The method is based on least squares approximation,
and complies with the requirements given below.
\begin{enumerate}
\item An approximate polynomial solution in the Bernstein form is computed. Therefore, the solution is given in the whole interval
$[0,\,1]$, not only at certain points.
\item The degree of the approximate polynomial solution can be chosen arbitrarily.
\item The method deals with both linear and nonlinear differential equations \eqref{E:diff}.
\item Not only second order differential equations \eqref{E:diff} can be solved but also higher order differential equations \eqref{E:diff}.
\item High efficiency is achieved thanks to some properties of Bernstein and dual Bernstein polynomials.
\end{enumerate}

As we shall see, the Bernstein form is a very suitable choice for an approximate polynomial solution of Problem \ref{P:BVP}. Here are the most important reasons.
\begin{enumerate}
\item The boundary conditions \eqref{E:gencond1} and \eqref{E:gencond2} can be easily satisfied by a polynomial in the Bernstein form.
\item Due to some properties of dual Bernstein polynomials, the cost of an iteration of our algorithm is $O(n^2)$, where $n$ is the degree
of the approximate polynomial solution in the Bernstein form which is derived in the iteration. The iteration is based, among other things,
on solving a least squares approximation problem. Note that the standard method of dealing with the least squares approximation is to solve
the system of normal equations with the complexity $O(n^3)$.
\item Bernstein polynomials can be easily differentiated and integrated.
\end{enumerate}

In recent years, the idea of representing approximate solutions of differential equations in the Bernstein basis has been used in several papers
(see, e.g., \cite{BB06,BB07,TS17}). The least squares approximation and orthogonality have been used for many years usually
in conjunction with \textit{Chebyshev polynomials} and \textit{Picard's iteration} (see, e.g., \cite{CN63,Fuk97,JYWB13,Wri64}).
However, according to the authors' knowledge, there are no methods based on the idea presented in this paper.

The paper is organized as follows. Section \ref{S:Prelim} has a preliminary character. In Section \ref{S:Main}, we give a description of our
method including algorithmic details of the implementation. Several illustrative examples are presented in Section \ref{S:Ex}.
For a brief summary of the paper, see Section \ref{S:Co}.

\section{Preliminaries}\label{S:Prelim}

Let $\Pi_n$ denote the space of all polynomials of degree at most $n$. \textit{Bernstein polynomials of degree $n$} are defined by
\begin{equation}\label{E:Bern}
B_i^n(x) = \binom{n}{i}x^i(1-x)^{n-i} \qquad (i=0,1,\ldots,n),
\end{equation}
and form a basis of $\Pi_n$. These polynomials are known for their important applications in \textit{computer-aided design}
and \textit{approximation theory} (see, e.g., \cite{Far12}). In each iteration, our method computes coefficients $p_{i,n}$ $(i=0,1,\ldots,n)$
of the \textit{Bernstein form} of an approximate solution
\begin{equation}\label{E:wnn}
w_n(x) = \sum_{i=0}^np_{i,n}B_i^n(x)
\end{equation}
of Problem \ref{P:BVP}.

Now, let us deal with the boundary conditions \eqref{E:gencond1} and \eqref{E:gencond2}. This can be done easily thanks to some properties of the
Bernstein polynomials \eqref{E:Bern}. See Lemmas \ref{L:der} and \ref{L:boundary}.

\begin{lemma}[\protect{e.g., \cite[\S5.3]{Far02}}]\label{L:der}
The $r$th derivative of the polynomial \eqref{E:wnn} is given by the formula
\begin{equation*}
w^{(r)}_n(x) = \sum_{j=0}^{n-r}p^{(r)}_{j,n}B_j^{n-r}(x),
\end{equation*}
where
\begin{equation}\label{E:derCoeff}
p^{(r)}_{j,n} := \frac{n!}{(n-r)!}\Delta^{r}p_{j,n} \qquad (j=0,1,\ldots,n-r)
\end{equation}
with the \emph{forward difference operator} $\Delta^r$ that acts on the first index (second index being fixed),
\begin{equation}\label{E:del}
\Delta^rp_{j,n} := \sum_{h=0}^{r} (-1)^{r-h}\binom{r}{h}p_{j+h,n}.
\end{equation}
Moreover,
\begin{align}
& w_n^{(r)}(0) = \frac{n!}{(n-r)!}\sum_{h=0}^{r}(-1)^{r-h}\binom{r}{h}p_{h,n},\label{E:wnder0}\\[1ex]
& w_n^{(r)}(1) = \frac{n!}{(n-r)!}\sum_{h=0}^{r}(-1)^{r-h}\binom{r}{h}p_{n-r+h,n}.\label{E:wnder1}
\end{align}
\end{lemma}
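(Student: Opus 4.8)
The plan is to reduce the whole statement to a single differentiation rule for one Bernstein polynomial and then iterate it by induction on $r$. First I would establish the elementary identity
\begin{equation*}
\frac{d}{dx}B_i^n(x) = n\bigl(B_{i-1}^{n-1}(x) - B_i^{n-1}(x)\bigr),
\end{equation*}
valid with the convention $B_{-1}^{n-1}=B_n^{n-1}=0$. This comes directly from differentiating the explicit product \eqref{E:Bern} and rewriting the two resulting terms by means of $\binom{n}{i}i = n\binom{n-1}{i-1}$ and $\binom{n}{i}(n-i)=n\binom{n-1}{i}$.

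For the base case $r=1$, I would differentiate \eqref{E:wnn} term by term, apply the rule above, and obtain two sums of Bernstein polynomials of degree $n-1$. Reindexing the first sum by $j=i-1$ (the out-of-range terms dropping out by the convention) merges them into $w_n'(x)=n\sum_{j=0}^{n-1}\bigl(p_{j+1,n}-p_{j,n}\bigr)B_j^{n-1}(x)$. As $p_{j+1,n}-p_{j,n}=\Delta p_{j,n}$ and $n=n!/(n-1)!$, this is precisely \eqref{E:derCoeff} for $r=1$.

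The inductive step repeats this computation one level deeper. Assuming the representation for some $r$, I would apply the differentiation rule to the degree $n-r$ basis polynomials occurring in it; the prefactor $n!/(n-r)!$ absorbs the extra factor $n-r$ to become $n!/(n-r-1)!$, and the same reindexing turns each coefficient $\Delta^r p_{j,n}$ into $\Delta^r p_{j+1,n}-\Delta^r p_{j,n}$. The one thing that must be verified is that this last difference equals $\Delta^{r+1}p_{j,n}$; this is the standard recurrence for the forward difference \eqref{E:del}, and keeping track of the fact that $\Delta$ acts on the first index alone is the only delicate point in the argument.

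Finally, to obtain \eqref{E:wnder0} and \eqref{E:wnder1}, I would evaluate the established representation of $w_n^{(r)}$ at the endpoints. Since $B_j^{n-r}(0)=\delta_{j,0}$ and $B_j^{n-r}(1)=\delta_{j,n-r}$ (using $0^0=1$), exactly one summand survives in each case, giving $w_n^{(r)}(0)=p_{0,n}^{(r)}$ and $w_n^{(r)}(1)=p_{n-r,n}^{(r)}$. Substituting \eqref{E:derCoeff} and \eqref{E:del} with $j=0$ and with $j=n-r$ then yields the two closed forms verbatim. I expect the index shifts in the inductive step, rather than any conceptual difficulty, to be the main place where care is needed.
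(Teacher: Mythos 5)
Your proof is correct: the single-polynomial differentiation rule, the induction on $r$ using the forward-difference recurrence $\Delta^{r+1}p_{j,n}=\Delta^{r}p_{j+1,n}-\Delta^{r}p_{j,n}$, and the endpoint evaluations via $B_j^{n-r}(0)=\delta_{j,0}$, $B_j^{n-r}(1)=\delta_{j,n-r}$ are all sound and complete. The paper itself offers no proof of this lemma, citing it as a classical result from \cite[\S5.3]{Far02}, and your argument is precisely the standard derivation given in such references, so the two approaches coincide.
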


\begin{lemma}\label{L:boundary}
The polynomial \eqref{E:wnn}, which has the \emph{outer} coefficients
\begin{align}
& p_{i,n} = \frac{(n-i)!}{n!}a_i - \sum_{h=0}^{i-1}(-1)^{i-h}\binom{i}{h}p_{h,n} \qquad (i=0,1,\ldots,k-1),\label{E:p1}\\
& p_{n-j,n} = (-1)^j\frac{(n-j)!}{n!}b_j - \sum_{h=1}^{j}(-1)^{h}\binom{j}{h}p_{n-j+h,n} \qquad (j=0,1,\ldots,l-1),\label{E:p2}
\end{align}
and arbitrary \emph{inner} coefficients $p_{i,n}$ $(i=k,k+1,\ldots,n-l)$, satisfies the boundary conditions
\begin{align}
& w_n^{(i)}(0) = a_i \qquad (i=0,1,\ldots,k-1),\label{E:wncon0}\\
& w_n^{(j)}(1)=b_j \qquad (j=0,1,\ldots,l-1)\label{E:wncon1}
\end{align}
(cf.~\eqref{E:gencond1} and \eqref{E:gencond2}).
\end{lemma}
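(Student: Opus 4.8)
The plan is to verify the two families of boundary conditions directly, by substituting the proposed outer coefficients into the derivative evaluation formulas supplied by Lemma \ref{L:der}. The key structural observation is that \eqref{E:p1} and \eqref{E:p2} are nothing more than the boundary equations $w_n^{(i)}(0)=a_i$ and $w_n^{(j)}(1)=b_j$ \emph{solved for a single coefficient}, and that this can be carried out consistently because of an underlying triangular dependence among the coefficients.

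First I would handle the left endpoint. Starting from \eqref{E:wnder0}, the requirement $w_n^{(i)}(0)=a_i$ reads
\[
\frac{n!}{(n-i)!}\sum_{h=0}^{i}(-1)^{i-h}\binom{i}{h}p_{h,n}=a_i.
\]
I would isolate the top term $h=i$, whose binomial coefficient is $1$ and whose sign is $+1$, move the remaining terms to the right, and multiply through by $(n-i)!/n!$. This produces exactly \eqref{E:p1}, expressing $p_{i,n}$ in terms of the strictly lower-indexed coefficients $p_{0,n},\ldots,p_{i-1,n}$. Because this dependence is lower-triangular, the values $p_{0,n},\ldots,p_{k-1,n}$ are determined recursively as $i$ runs through $0,1,\ldots,k-1$, and with these choices each of the $k$ conditions \eqref{E:wncon0} holds.

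Next I would treat the right endpoint in the same spirit, now invoking \eqref{E:wnder1}. The condition $w_n^{(j)}(1)=b_j$ becomes
\[
\frac{n!}{(n-j)!}\sum_{h=0}^{j}(-1)^{j-h}\binom{j}{h}p_{n-j+h,n}=b_j.
\]
The natural term to isolate here is $h=0$, which corresponds to the coefficient $p_{n-j,n}$ and carries the sign $(-1)^{j}$. Solving for $p_{n-j,n}$ and multiplying by $(-1)^{j}$, the single algebraic point that requires care is the sign identity $(-1)^{2j-h}=(-1)^{h}$, which recasts the remaining summands into precisely the shape displayed in \eqref{E:p2}. Again the dependence is triangular, since $p_{n-j,n}$ involves only $p_{n-j+1,n},\ldots,p_{n,n}$, so the right outer coefficients are determined recursively for $j=0,1,\ldots,l-1$ and enforce the $l$ conditions \eqref{E:wncon1}.

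I do not expect a real obstacle: the argument is a direct rearrangement of the identities in Lemma \ref{L:der}. The only items demanding attention are the sign bookkeeping in the $x=1$ case and the implicit hypothesis that the degree is large enough, namely $k+l=m\le n$, so that the left outer indices $0,\ldots,k-1$ and the right outer indices $n-l+1,\ldots,n$ are disjoint. This disjointness is what guarantees that the two outer-coefficient prescriptions never conflict and that the inner coefficients $p_{k,n},\ldots,p_{n-l,n}$ genuinely remain free, as claimed.
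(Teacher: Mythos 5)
Your proof is correct and takes essentially the same route as the paper, whose proof simply says to apply \eqref{E:wnder0} and \eqref{E:wnder1} to \eqref{E:wncon0} and \eqref{E:wncon1} and rearrange; your isolation of the $h=i$ term (resp.\ the $h=0$ term), the sign identity $(-1)^{2j-h}=(-1)^{h}$, and the triangular-recursion observation are exactly the ``some algebra'' the paper leaves implicit. One minor point: disjointness of the two outer index sets only needs $m\le n+1$ rather than $m\le n$, and indeed the paper also invokes this lemma with $n=m-1$ (Step I of Algorithm \ref{A:Alg}), where there are no inner coefficients at all.
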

\begin{proof}
We apply the formulas \eqref{E:wnder0} and \eqref{E:wnder1} to \eqref{E:wncon0} and \eqref{E:wncon1}, respectively. After some algebra,
we derive \eqref{E:p1} and \eqref{E:p2}.
\end{proof}

\begin{remark}
As is known, the boundary value problem with nonhomogeneous boundary conditions \eqref{E:gencond1} and \eqref{E:gencond2} (see Problem \ref{P:BVP})
can always be converted to the boundary value problem with homogeneous boundary conditions, i.e., the conditions with $a_i=0$ $(i=0,1,\ldots,k-1)$
and $b_j=0$ $(j=0,1,\ldots,l-1)$ (cf.~\eqref{E:wncon0} and \eqref{E:wncon1}). To satisfy these homogeneous boundary conditions, no computations are
required since the outer coefficients of a polynomial solution in the Bernstein form are equal to $0$ (cf.~\eqref{E:p1} and \eqref{E:p2}).
This is one of the reasons why the Bernstein basis is so suitable for boundary value problems.
\end{remark}

According to Lemma \ref{L:boundary}, the outer coefficients $p_{i,n}$ $(i=0,1,\ldots,k-1)$ and $p_{n-j,n}$ $(j=0,1,\ldots,l-1)$ depend only
on the boundary conditions \eqref{E:gencond1} and \eqref{E:gencond2}. As a result, these coefficients can be easily computed at the beginning of
each iteration of the algorithm. What remains is to compute efficiently the optimal inner coefficients $p_{i,n}$ $(i=k,k+1,\ldots,n-l)$.
As we shall see, dual Bernstein polynomials are of great importance in this approximation process.

Let us define the \textit{$L_2$ inner product} $\langle\cdot,\cdot\rangle$ by
\begin{equation}\label{Eq:inprod}
\langle f,g\rangle := \int_0^1f(x)g(x)\dx.
\end{equation}
The \textit{dual Bernstein polynomial basis of degree $n$} (see \cite{Cie87})
\begin{equation}\label{E:dual}
D_{0}^n(x),D_1^n(x),\ldots, D_n^n(x)
\end{equation}
satisfies the relation
$$
\left\langle D_i^n,B_j^n\right\rangle =\delta_{i,j} \qquad (i,j=0,1,\ldots, n),
$$
where $\delta_{i,j}$ equals $1$ if $i=j$, and $0$ otherwise. According to the next lemma, dual Bernstein polynomials \eqref{E:dual}
can be efficiently represented in the Bernstein basis.

\begin{lemma}[\cite{LW11}]\label{L:c-tab}
Dual Bernstein polynomials \eqref{E:dual} have the Bernstein representation
\begin{equation*}
	 D^{n}_i(x)=\sum_{j=0}^{n}c_{i,j}^{(n)}B^{n}_j(x) \qquad (i=0,1,\ldots,n),
\end{equation*}
where the coefficients $c_{i,j}^{(n)}$ satisfy the recurrence relation
\begin{align*}\label{E:c-rec}
&c_{i+1,j}^{(n)} = \frac{1}{A(i)}\left[2(i-j)(i+j-n)c_{i,j}^{(n)}+B(j)c_{i,j-1}^{(n)}+A(j)c_{i,j+1}^{(n)}-B(i)c_{i-1,j}^{(n)}\right]\\ \notag
&\hphantom{c_{i+1,j}^{(n)} = \frac{1}{A(i)}\left[2(i-j)(i+j-n)c_{i,j}^{(n)}+B(j)c_{i,j-1}^{(n)}\right.}(i=0,1,\ldots,n-1;\;j=0,1,\ldots,n)
\end{align*}	
with
\[
\begin{array}{l}
A(u):= (u-n)(u+1),\\[1ex]	
B(u):= u(u-n-1).
\end{array}
\]
We adopt the convention that $c_{i,j}^{(n)} := 0$ if $i < 0$, or $j < 0$, or $j > n$.
The starting values are
\begin{equation*}
c_{0,j}^{(n)} := (-1)^j \frac{(n+1)(n+1-j)_{j+1}}{(j+1)!} \qquad (j=0,1,\ldots,n).
\end{equation*}
\end{lemma}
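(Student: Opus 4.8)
The plan is to exploit the defining duality $\langle D_i^n, B_j^n\rangle=\delta_{i,j}$ together with the symmetric second-order operator
\[
\mathcal{L}f := (x(1-x)f'(x))' = x(1-x)f''(x)+(1-2x)f'(x),
\]
which maps $\Pi_n$ into itself and is self-adjoint for $\langle\cdot,\cdot\rangle$: integrating by parts, $\langle\mathcal{L}f,g\rangle=-\int_0^1 x(1-x)f'(x)g'(x)\dx=\langle f,\mathcal{L}g\rangle$, the boundary terms vanishing because $x(1-x)=0$ at the endpoints.

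First I would compute the action of $\mathcal{L}$ on the Bernstein basis. Starting from $x(1-x)(B_j^n)'(x)=(j-nx)B_j^n(x)$ and the standard differentiation and degree-elevation formulas for Bernstein polynomials (cf.\ Lemma \ref{L:der}), a short calculation puts $\mathcal{L}B_j^n$ into the tridiagonal form
\[
\mathcal{L}B_j^n = -B(j)\,B_{j-1}^n - [2j(n-j)+n]\,B_j^n - A(j)\,B_{j+1}^n,
\]
with $A,B$ exactly as in the statement; here the identities $-B(j)=j(n+1-j)$ and $-A(j)=(j+1)(n-j)$ are what make $A$ and $B$ surface. I would also record the symmetry $c_{i,j}^{(n)}=c_{j,i}^{(n)}$: expanding one factor and using duality gives $\langle D_i^n,D_j^n\rangle=c_{j,i}^{(n)}$, and symmetry of the inner product forces $c_{i,j}^{(n)}=c_{j,i}^{(n)}$.

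The core step is to evaluate $\langle\mathcal{L}D_i^n,D_j^n\rangle$ two ways. Expanding $D_i^n=\sum_k c_{i,k}^{(n)}B_k^n$, applying $\mathcal{L}$ termwise, and using $\langle B_m^n,D_j^n\rangle=\delta_{m,j}$ gives
\[
\langle\mathcal{L}D_i^n,D_j^n\rangle = -B(j+1)c_{i,j+1}^{(n)} - [2j(n-j)+n]c_{i,j}^{(n)} - A(j-1)c_{i,j-1}^{(n)}.
\]
By self-adjointness this equals the same expression with $i$ and $j$ interchanged; rewriting the latter via $c_{j,i\pm1}^{(n)}=c_{i\pm1,j}^{(n)}$, equating, and isolating the $c_{i+1,j}^{(n)}$ term yields
\[
B(i+1)c_{i+1,j}^{(n)} = 2(i-j)(i+j-n)c_{i,j}^{(n)} + B(j+1)c_{i,j+1}^{(n)} + A(j-1)c_{i,j-1}^{(n)} - A(i-1)c_{i-1,j}^{(n)}.
\]
The stated recurrence is then immediate from the elementary identities $B(i+1)=A(i)$, $B(j+1)=A(j)$, $A(j-1)=B(j)$, and $A(i-1)=B(i)$, and the out-of-range convention $c_{i,j}^{(n)}=0$ is consistent since $B(0)=A(n)=0$.

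What remains is the starting row $c_{0,j}^{(n)}$, and this is the step I expect to be the main obstacle, since the recurrence itself is purely formal. Writing $D_0^n=\sum_j c_{0,j}^{(n)}B_j^n$ and imposing $\langle D_0^n,B_l^n\rangle=\delta_{0,l}$ turns, through the known Gram entries $\langle B_j^n,B_l^n\rangle=\binom{n}{j}\binom{n}{l}/[(2n+1)\binom{2n}{j+l}]$, into a finite binomial sum; I would check that the closed form $c_{0,j}^{(n)}=(-1)^j(n+1)\binom{n+1}{j+1}$ (which is the product form in the statement) solves it, by a Chu--Vandermonde-type evaluation. With the base row fixed and $A(0)=-n\neq0$, the $i=0$ instance of the recurrence (together with $c_{-1,j}^{(n)}=0$) determines $c_{1,j}^{(n)}$, and induction on $i$ then reproduces the whole table.
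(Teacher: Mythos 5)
The paper itself gives no proof of this lemma: it is imported verbatim from \cite{LW11}, so there is no in-paper argument to compare against. Your proposal is therefore judged as a self-contained alternative derivation, and it is correct in all its key steps. The operator $\mathcal{L}f=(x(1-x)f')'$ is indeed self-adjoint on $\Pi_n$ with vanishing boundary terms; the tridiagonal action $\mathcal{L}B_j^n=-B(j)B_{j-1}^n-[2j(n-j)+n]B_j^n-A(j)B_{j+1}^n$ checks out (it follows from $x(1-x)(B_j^n)'=(j-nx)B_j^n$ together with $xB_{j-1}^{n-1}=\tfrac{j}{n}B_j^n$ and $(1-x)B_j^{n-1}=\tfrac{n-j}{n}B_j^n$); the symmetry $c_{i,j}^{(n)}=\langle D_i^n,D_j^n\rangle=c_{j,i}^{(n)}$ is right; and the two-sided evaluation of $\langle\mathcal{L}D_i^n,D_j^n\rangle$ produces exactly the stated recurrence once one uses $2j(n-j)-2i(n-i)=2(i-j)(i+j-n)$ and the shift identities $B(i+1)=A(i)$, $B(j+1)=A(j)$, $A(j-1)=B(j)$, $A(i-1)=B(i)$. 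The division by $A(i)=(i-n)(i+1)$ is legitimate for $i=0,1,\ldots,n-1$, and the out-of-range convention is consistent precisely because $B(0)=A(n)=0$, as you note. By contrast, the proof in \cite{LW11} treats the much more general \emph{constrained} dual Bernstein polynomials with a Jacobi weight, and obtains the recurrence from the representation of those polynomials in terms of Hahn orthogonal polynomials and the relations satisfied by that family. Your argument buys elementarity and self-containedness for the special case actually used in this paper (unconstrained, Legendre weight $\equiv1$); the price is that it does not obviously extend to the constrained/Jacobi setting without replacing $\mathcal{L}$ by the appropriate Jacobi operator and re-deriving the tridiagonal action.

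The only place where real work is left undone is the starting row: you assert, but do not verify, that $c_{0,j}^{(n)}=(-1)^j(n+1)\binom{n+1}{j+1}$ (your identification of this with the product form in the statement is correct). The claim is true and the verification is routine, so this is a fillable gap rather than a flaw, but as written it is the weakest link. A cheaper route than your Gram-matrix computation: since $B_1^n,\ldots,B_n^n$ span $x\Pi_{n-1}$, it suffices to check $\int_0^1 D_0^n(x)\,x^s\,{\rm d}x=\delta_{s,0}$ for $s=0,1,\ldots,n$. Using $\int_0^1 B_j^n(x)x^s\,{\rm d}x=\frac{n!}{(n+s+1)!}\cdot\frac{(j+s)!}{j!}$, the required sum becomes
\begin{equation*}
\sum_{j=0}^{n}(-1)^j\binom{n+1}{j+1}(j+1)(j+2)\cdots(j+s),
\end{equation*}
and since the $(n+1)$st finite difference of any polynomial of degree at most $n$ vanishes, this alternating sum equals $P(-1)$ with $P(j)=(j+1)(j+2)\cdots(j+s)$, which is $1$ for $s=0$ (empty product) and $0$ for $1\le s\le n$. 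This confirms the base row, after which your induction on $i$ via the recurrence (using $A(0)\neq 0$ and $c_{-1,j}^{(n)}=0$) completes the proof.
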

\noindent Notice that Lemma \ref{L:c-tab} results in an efficient algorithm of computing the connection coefficients $c_{i,j}^{(n)}$ $(i,j=0,1,\ldots,n)$
with the complexity $O(n^2)$ (cf.~\cite[Algorithm 3.3 for $\alpha,\beta,k,l=0$]{LW11}).

Recently, dual Bernstein polynomials have been extensively studied because of their applications in numerical analysis and computer-aided design
(see, e.g., \cite{GLW15,GLW17,LW11,WGL15,WL09}). In this paper, we present another application of these dual bases.

\section{An iterative approximate method of solving boundary value problems using dual Bernstein polynomials}\label{S:Main}

In this section, we first present a sketch of the idea for our iterative approximate method of solving Problem \ref{P:BVP}. Then, we give algorithmic
details of the implementation.

Let us assume that we are looking for the approximate polynomial solution of degree $N$ $(N \ge m)$,
\begin{equation}\label{E:wNtarget}
w_{N}(x) := \sum_{i=0}^{N}p_{i,N}B_i^{N}(x),
\end{equation}
i.e., the goal is to compute the coefficients $p_{i,N}$ $(i=0,1,\ldots,N)$. Recall that $m$ is the order of the differential equation \eqref{E:diff}, and
$m=k+l$.

We begin with the polynomial of degree $m-1$,
\begin{equation*}
w_{m-1}(x) := \sum_{i=0}^{m-1}p_{i,m-1}B_i^{m-1}(x),
\end{equation*}
where the coefficients $p_{i,m-1}$ $(i=0,1,\ldots,m-1)$ are given by the formulas \eqref{E:p1} and \eqref{E:p2} for $n := m-1$.
Therefore, $w_{m-1}$ depends only on the boundary conditions \eqref{E:gencond1} and \eqref{E:gencond2}, and the least
squares approximation is not involved here.

Next, for $n=m,m+1,\ldots,N$, the goal is to compute the approximate solution $w_n \in \Pi_n$ assuming
that the approximate solution $w_{n-1} \in \Pi_{n-1}$ was computed in the previous iteration of the algorithm.
The approximate solution
\begin{equation}\label{E:wn}
w_n(x) = \sum_{i=0}^np_{i,n}B_i^n(x)
\end{equation}
must satisfy
\begin{itemize}
\item[(i)] the \textit{$L_2$ optimality condition}
\begin{equation}\label{E:L2task}
\left\|w^{(m)}_n - f\left(\cdot,w_{n-1},w'_{n-1},\ldots,w_{n-1}^{(m-1)}\right)\right\| = \min_{w \in \Pi_n} \left\|w^{(m)} - f\left(\cdot,w_{n-1},w'_{n-1},\ldots,w_{n-1}^{(m-1)}\right) \right\|,
\end{equation}
where $\|\cdot\|:=\sqrt{\left\langle\cdot,\cdot\right\rangle}$ is the $L_2$ norm (cf.~\eqref{Eq:inprod}),
\item[(ii)] the boundary conditions
\begin{align}
& w_n^{(i)}(0) = a_i \qquad (i=0,1,\ldots,k-1),\label{E:wcond1}\\
& w_n^{(j)}(1)= b_j \qquad (j=0,1,\ldots,l-1)\label{E:wcond2}
\end{align}
(cf.~\eqref{E:gencond1} and \eqref{E:gencond2}).
\end{itemize}
According to Lemma \ref{L:boundary}, the outer coefficients $p_{i,n}$ $(i=0,1,\ldots,k-1)$ and $p_{n-j,n}$ $(j=0,1,\ldots,l-1)$ are given by \eqref{E:p1}
and \eqref{E:p2}, respectively. Now, the boundary conditions \eqref{E:wcond1} and \eqref{E:wcond2} are satisfied, and the goal is to compute the inner
coefficients $p_{i,n}$ $(i=k,k+1,\ldots,n-l)$ so that the $L_2$ optimality condition \eqref{E:L2task} is satisfied. Theorem \ref{T:main} shows how to deal
with this problem efficiently.

\begin{theorem}\label{T:main}
The inner coefficients $p_{i,n}$ $(i=k,k+1,\ldots,n-l)$ of the approximate solution $w_n$ (see \eqref{E:wn}) satisfy the
\emph{Toeplitz system of linear equations}
\begin{equation}\label{E:sys}
\mathbf{G}_n\mathbf{p}_n=\mathbf{v}_n,
\end{equation}
where $\mathbf{G}_n:=\left[g_{i,j}^{(n)}\right] \in \R^{(n-m+1)\times(n-m+1)}$, $\mathbf{p}_n := \left[p_{k,n},p_{k+1,n},\ldots,p_{n-l,n}\right]^T$,
$\mathbf{v}_n := \left[v_i^{(n)}\right] \in \R^{n-m+1}$,
\begin{align}
&g_{i,j}^{(n)} := (-1)^{l+i-j}\binom{m}{j+k-i} \qquad (i,j=0,1,\ldots,n-m),\label{E:gij}\\[1ex]
\notag&v_i^{(n)} := \frac{(n-m)!}{n!}\sum_{q=0}^{n-m}c_{i,q}^{(n-m)}I_{q,n}
-\left(\sum_{h=0}^{k-i-1}+\sum_{h=n-l-i+1}^{m}\right)(-1)^{m-h}\binom{m}{h}p_{i+h,n}\\
\notag&\hphantom{v_i^{(n)} := \frac{(n-m)!}{n!}\sum_{q=0}^{n-m}c_{i,q}^{(n-m)}I_{q,n}
-\left(\sum_{h=0}^{k-i-1}+\sum_{h=n-l-i+1}^{m}\right)(-1)^{m-h}} (i=0,1,\ldots,n-m)
\end{align}
with $c_{i,j}^{(n-m)}$ as defined in Lemma \ref{L:c-tab}, and
\begin{equation}\label{E:int}
I_{q,n} := \left\langle f\left(\cdot,w_{n-1},w'_{n-1},\ldots,w_{n-1}^{(m-1)}\right),B_q^{n-m}\right\rangle \qquad (q=0,1,\ldots,n-m).
\end{equation}
\end{theorem}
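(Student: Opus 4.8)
The plan is to reduce the constrained least squares problem to the computation of a single $L_2$ orthogonal projection, and then to read off the required Bernstein coefficients using the duality relation $\langle D_i^{n-m},B_j^{n-m}\rangle=\delta_{i,j}$ together with Lemma~\ref{L:c-tab}. Write $g:=f(\cdot,w_{n-1},w_{n-1}',\ldots,w_{n-1}^{(m-1)})$ for the known right-hand side built from the previous iterate. Since differentiation maps $\Pi_n$ onto $\Pi_{n-m}$, the minimization in \eqref{E:L2task} is really a minimization of $\|u-g\|$ over $u=w_n^{(m)}\in\Pi_{n-m}$; by the projection theorem its minimizer is the orthogonal projection $Pg$ of $g$ onto $\Pi_{n-m}$, characterized by $w_n^{(m)}-g\perp\Pi_{n-m}$. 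Hence I would first argue that the optimal $w_n$ satisfies $w_n^{(m)}=Pg$.

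Next I would compute the degree $n-m$ Bernstein coefficients of $w_n^{(m)}$. By Lemma~\ref{L:der}, $w_n^{(m)}=\sum_{i=0}^{n-m}p_{i,n}^{(m)}B_i^{n-m}$ with $p_{i,n}^{(m)}=\frac{n!}{(n-m)!}\Delta^m p_{i,n}$. On the other hand, for any $u=\sum_j\hat u_jB_j^{n-m}\in\Pi_{n-m}$ the duality relation gives $\hat u_j=\langle u,D_j^{n-m}\rangle$. Applying this to $u=w_n^{(m)}=Pg$ and using $\langle Pg,D_j^{n-m}\rangle=\langle g,D_j^{n-m}\rangle$ (because $D_j^{n-m}\in\Pi_{n-m}$ while $g-Pg\perp\Pi_{n-m}$), I then expand $D_j^{n-m}$ in the Bernstein basis via Lemma~\ref{L:c-tab} to obtain $p_{j,n}^{(m)}=\langle g,D_j^{n-m}\rangle=\sum_{q=0}^{n-m}c_{j,q}^{(n-m)}I_{q,n}$, with $I_{q,n}$ exactly as in \eqref{E:int}. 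This is the step where dual Bernstein polynomials do the essential work.

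It then remains to turn the scalar equations $\frac{n!}{(n-m)!}\Delta^m p_{i,n}=\sum_q c_{i,q}^{(n-m)}I_{q,n}$ $(i=0,1,\ldots,n-m)$ into the advertised linear system for the unknown inner coefficients. Expanding $\Delta^m p_{i,n}=\sum_{h=0}^m(-1)^{m-h}\binom{m}{h}p_{i+h,n}$ via \eqref{E:del}, I would split the sum according to whether the index $i+h$ is inner ($k\le i+h\le n-l$) or outer, move the outer terms — already known from \eqref{E:p1}--\eqref{E:p2} — to the right-hand side, and absorb the factor $(n-m)!/n!$; this produces precisely $v_i^{(n)}$. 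For the inner part I would substitute $i+h=k+j$, so that $p_{i+h,n}=p_{k+j,n}$ is the $j$th entry of $\mathbf{p}_n$ and its coefficient becomes $(-1)^{m-(k+j-i)}\binom{m}{k+j-i}$; using $m=k+l$ this equals $(-1)^{l+i-j}\binom{m}{j+k-i}=g_{i,j}^{(n)}$, which depends only on $i-j$ and hence yields a Toeplitz matrix.

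The main obstacle I anticipate is twofold and essentially bookkeeping rather than conceptual. First, one must justify cleanly that the constrained optimum coincides with the unconstrained projection $Pg$, i.e.\ that fixing the outer coefficients through Lemma~\ref{L:boundary} still allows $w_n^{(m)}$ to reach $Pg$ — equivalently, that $\mathbf{G}_n$ is nonsingular, so the system admits a (unique) solution. Second, one must carry out the index split of the forward difference without sign or range errors, so that the inner indices reassemble into $g_{i,j}^{(n)}$ while the leftover outer terms match the two partial sums in $v_i^{(n)}$ exactly. The dual-basis identity $\hat u_j=\langle g,D_j^{n-m}\rangle$ is the one genuinely nontrivial ingredient; everything after it is careful reindexing.
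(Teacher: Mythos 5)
Your proposal is correct and follows essentially the same route as the paper's own proof: represent $w_n^{(m)}$ in the Bernstein basis of degree $n-m$ via Lemma \ref{L:der}, identify its optimal coefficients as the dual-basis inner products $\langle g, D_i^{n-m}\rangle = \sum_q c_{i,q}^{(n-m)} I_{q,n}$ using Lemma \ref{L:c-tab}, and equate the two expressions, splitting off the known outer coefficients to obtain the Toeplitz system. The only difference is that you spell out the projection-theorem justification for the optimality step (which the paper asserts tersely) and note, as the paper also leaves open, that nonsingularity of $\mathbf{G}_n$ is not addressed by the theorem itself.
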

\begin{proof}
First, using Lemma \ref{L:der}, we obtain
\begin{equation*}
w^{(m)}_n(x) = \sum_{i=0}^{n-m}p^{(m)}_{i,n}B_i^{n-m}(x),
\end{equation*}
where
\begin{equation}\label{E:wnm1}
p^{(m)}_{i,n} := \frac{n!}{(n-m)!}\sum_{h=0}^{m} (-1)^{m-h}\binom{m}{h}p_{i+h,n} \qquad (i=0,1,\ldots,n-m).
\end{equation}
Next, $w^{(m)}_n$ must satisfy the $L_2$ optimality condition \eqref{E:L2task}. Remembering that $B_i^{n-m}$ and $D_i^{n-m}$ $(i=0,1,\ldots,n-m)$ are
dual bases of the space $\Pi_{n-m}$, and using the Bernstein representation of $D_i^{n-m}$ (see Lemma \ref{L:c-tab}), we derive the formulas for the
optimal values of the coefficients $p^{(m)}_{i,n}$,
\begin{align}
\notag p^{(m)}_{i,n} &:= \left\langle f\left(\cdot,w_{n-1},w'_{n-1},\ldots,w_{n-1}^{(m-1)}\right),D_i^{n-m}\right\rangle\\
              &= \sum_{j=0}^{n-m}c_{i,j}^{(n-m)}\left\langle f\left(\cdot,w_{n-1},w'_{n-1},\ldots,w_{n-1}^{(m-1)}\right),B_j^{n-m}\right\rangle
             \qquad (i=0,1,\ldots,n-m).\label{E:wnm2}
\end{align}
Now, we equate \eqref{E:wnm1} to \eqref{E:wnm2}, and obtain the system \eqref{E:sys}. Finally, since $\binom{p}{r} = 0$ if $r < 0$ or $r > p$, it can be
easily checked that $g_{i,j}^{(n)} = 0$ if $i-j>k$ or $j-i>l$ (see \eqref{E:gij}). Therefore, \eqref{E:sys} is a Toeplitz system of linear equations.
\end{proof}

\begin{remark}\label{R:Toe}
Since \eqref{E:sys} is a Toeplitz system of linear equations, it can be solved, in general, with the complexity $O(n^2)$ using
\textit{generalized Levinson's algorithm} (see, e.g., \cite[\S2.8]{PTVF07}). There are also algorithms that solve Toeplitz systems of linear
equations with the complexity $O(n\log^2n)$ (see, e.g., \cite{Bun85,Hoog87} and the lists of references given there), but in the context of our
problem their significance is only theoretical. Moreover, since $\mathbf{G}_n$ is associated with the forward difference operator (see \eqref{E:del}
and \eqref{E:gij}), in some cases it can be inverted using explicit formulas (see, e.g., \cite{All73,HP72}).

Now, let us assume that $k, l \ll n$ which is the most common case for our problem. The bandwidth of $\mathbf{G}_n$ is thus very small.
Consequently, the best option in practice is to solve the system \eqref{E:sys} using \textit{Gaussian elimination for band matrices} with the complexity
$O(kln)$ (see, e.g., \cite[\S4.3]{GL96}). Furthermore, let us list some special cases that can be treated separately in order to speed up the computations.
\begin{enumerate}
\item If $k=0$, then $\mathbf{G}_n$ is an \textit{upper triangular matrix}, and the system \eqref{E:sys} can be solved using  \textit{back substitution}
with the complexity $O(ln)$ (see, e.g., \cite[\S4.3]{GL96}).
\item If $l=0$, then $\mathbf{G}_n$ is a \textit{lower triangular matrix}, and the system \eqref{E:sys} can be solved using  \textit{forward substitution}
 with the complexity $O(kn)$ (see, e.g., \cite[\S4.3]{GL96}).
\item If $k=l=1$, then $\mathbf{G}_n$ is a \textit{tridiagonal matrix}, and the system \eqref{E:sys} can be solved with the complexity $O(n)$
(see, e.g., \cite[\S2.4]{PTVF07}).
\end{enumerate}
\end{remark}

\begin{remark}\label{R:der1}
Notice that the computation of \eqref{E:int} requires a method of computing the collection of integrals
$$
\int_0^1 f(x)x^q(1-x)^{n-m-q}\dx \qquad (q=0,1,\ldots,n-m),
$$
where $f(x) \equiv f\left(x,w_{n-1}(x),w'_{n-1}(x),\ldots,w_{n-1}^{(m-1)}(x)\right)$.
Since we do not assume anything about $f$, it is impossible to recommend one specific method working for every example.
However, \textit{Gaussian quadratures} (see, e.g., \cite[\S4.6]{PTVF07}) or well-developed algorithms provided by some computing
environments may be good choices for many examples. We used Maple{\small \texttrademark}13 function \texttt{int} with the
option \texttt{numeric}, and the results look fine (see Section \ref{S:Ex}).
\end{remark}

\begin{remark}\label{R:der2}
Observe that $f$ in \eqref{E:int} requires $w_{n-1}^{(r)}$ $(r=1,2,\ldots,m-1)$. According to Lemma \ref{L:der}, these polynomials can be represented
in the Bernstein form. The coefficients $p_{j,n-1}^{(r)}$ $(r=1,2,\ldots,m-1;\;j=0,1,\ldots,n-r-1)$ of these representations can be computed
using \eqref{E:derCoeff} with \eqref{E:del}. However, this approach is inefficient, i.e., the complexity is $O(nm^2)$. Note that it is much more efficient to
put $\Delta^rp_{j,n-1}$ $(r=1,2,\ldots,m-1;\;j=0,1,\ldots,n-r-1)$ in a table, and complete this table using the well-know recurrence relation for
the forward difference operator,
\begin{equation*}
\Delta^rp_{j,n-1} = \Delta^{r-1}p_{j+1,n-1} - \Delta^{r-1}p_{j,n-1} \qquad (r=1,2,\ldots,m-1;\;j=0,1,\ldots,n-r-1),
\end{equation*}
where $\Delta^{0}p_{q,n-1} = p_{q,n-1}$ $(q=0,1,\ldots,n-1)$. The complexity of this approach is $O(nm)$. Moreover, the application of a
quadrature to \eqref{E:int} (see Remark \ref{R:der1}) requires a method of evaluating a polynomial in the Bernstein form.
One can use \textit{Horner-like scheme} (see, e.g., \cite[\S5.4.2]{Gol03}) which has the complexity $O(n)$, or \textit{de Casteljau's algorithm}
(see, e.g., \cite[\S5.1]{Gol03}) which has the complexity $O(n^2)$.
\end{remark}

Now, we summarise the whole idea in Algorithm \ref{A:Alg}.

\begin{algorithm}\label{A:Alg}
	\ \\[0.5ex]
	\noindent \texttt{Input}: $f$, $k$, $l$, $N$, $a_i$ $(i=0,1,\ldots,k-1)$, $b_j$ $(j=0,1,\ldots,l-1)$\\[0.5ex]
\noindent \texttt{Output}: the coefficients $p_{i,N}$ $(i=0,1,\ldots,N)$ of $w_N$ (see \eqref{E:wNtarget})
\begin{description}
\itemsep2pt
\item[\texttt{Step I.}] Compute the coefficients $p_{i,m-1}$ $(i=0,1,\ldots,m-1)$ of $w_{m-1}$ by \eqref{E:p1} and \eqref{E:p2}
for $n := m-1$, where $m:=k+l$.
\item[\texttt{Step II.}] For $n=m,m+1,\ldots,N$,
\begin{enumerate}
\itemsep2pt
\item compute the outer coefficients $p_{i,n}$ $(i=0,1,\ldots,k-1)$ and $p_{n-j,n}$ $(j=0,1,\ldots,l-1)$ of $w_n$ by \eqref{E:p1} and \eqref{E:p2}, respectively;
\item compute $c_{i,j}^{(n-m)}$ $(i,j=0,1,\ldots,n-m)$ using Lemma \ref{L:c-tab};
\item compute the coefficients $p_{j,n-1}^{(r)}$ $(j=0,1,\ldots,n-r-1)$ of $w_{n-1}^{(r)}$ $(r=1,2,\ldots,m-1)$ using Remark \ref{R:der2};
\item compute $I_{q,n}$ $(q=0,1,\ldots,n-m)$ by \eqref{E:int} (see Remarks \ref{R:der1} and \ref{R:der2});
\item compute the inner coefficients $p_{i,n}$ $(i=k,k+1,\ldots,n-l)$ of $w_n$ by solving the Toeplitz system of linear equations \eqref{E:sys}
(see Remark \ref{R:Toe}).
\end{enumerate}
\item[\texttt{Step III.}] Return the coefficients $p_{i,N}$ $(i=0,1,\ldots,N)$ of $w_N$.
\end{description}
\end{algorithm}

\section{Examples}\label{S:Ex}

Results of the experiments were obtained in Maple{\small \texttrademark}13 using $32$-digit arithmetic. Integrals \eqref{E:int} were computed using the
Maple{\small \texttrademark}13 function \texttt{int} with the option \texttt{numeric}. Further on in this section, we will use the following notation:
\begin{equation}\label{E:errfun}
\varepsilon_n(x) := |y(x)-w_n(x)| \qquad (0\le x\le1)
\end{equation}
is the \textit{error function}, and
\begin{equation}\label{E:maxErr}
E_n := \max_{x \in Q_M} \varepsilon_n(x) \approx \max_{x \in [0,1]} \varepsilon_n(x)
\end{equation}
is the \textit{maximum error}, where $Q_M := \left\{0, 1/M, 2/M,\ldots, 1\right\}$ with $M=200$.

\begin{example}\label{Ex:1}
First, let us consider the following second order nonlinear differential equation:
\begin{equation*}
y''(x) = [y'(x)]^2+1 \qquad  (0 \le x \le 1)
\end{equation*}
with the boundary conditions
\begin{equation*}
y(0) = 0, \quad y(1) = 0.
\end{equation*}
According to \cite[chapter II, \S4]{GGL85},
\begin{equation*}
y(x) = -\ln{\frac{\cos\left(x-\frac{1}{2}\right)}{\cos\frac{1}{2}}}
\end{equation*}
is the exact solution. We have computed the approximate solutions $w_n$ $(n=2,3,\ldots,20)$. Fig.~\ref{figure1}
illustrates the error functions $\varepsilon_3$,  $\varepsilon_8$ and $\varepsilon_{20}$. For the list of maximum errors
\eqref{E:maxErr}, see Table \ref{tab:table1}.

\begin{figure}[H]
\captionsetup{margin=0pt, font={scriptsize}}
\begin{center}
\setlength{\tabcolsep}{0mm}
\begin{tabular}{c}
\subfloat[]{\label{figure1a}\includegraphics[width=0.33\textwidth]{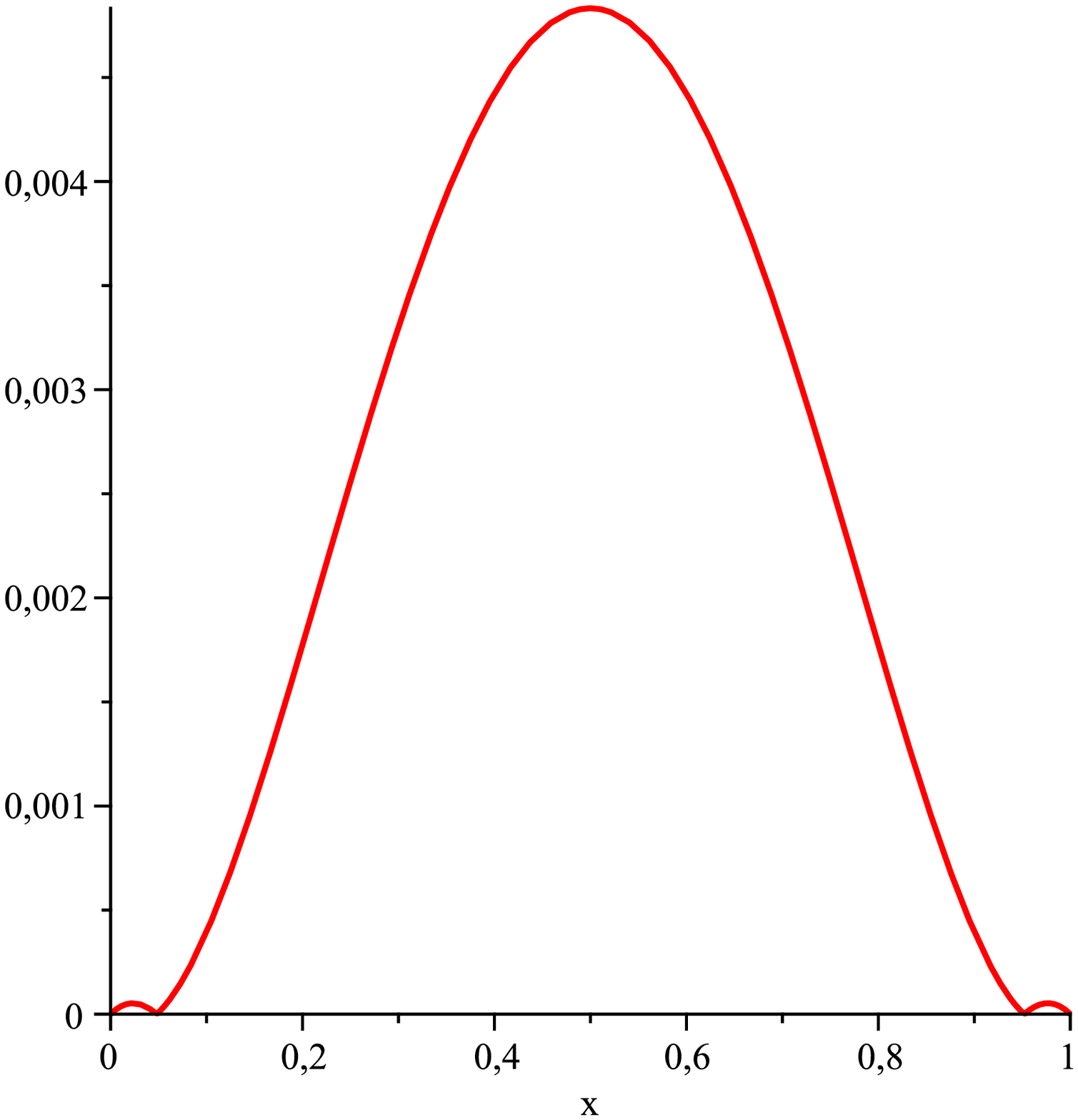}}
\subfloat[]{\label{figure1b}\includegraphics[width=0.33\textwidth]{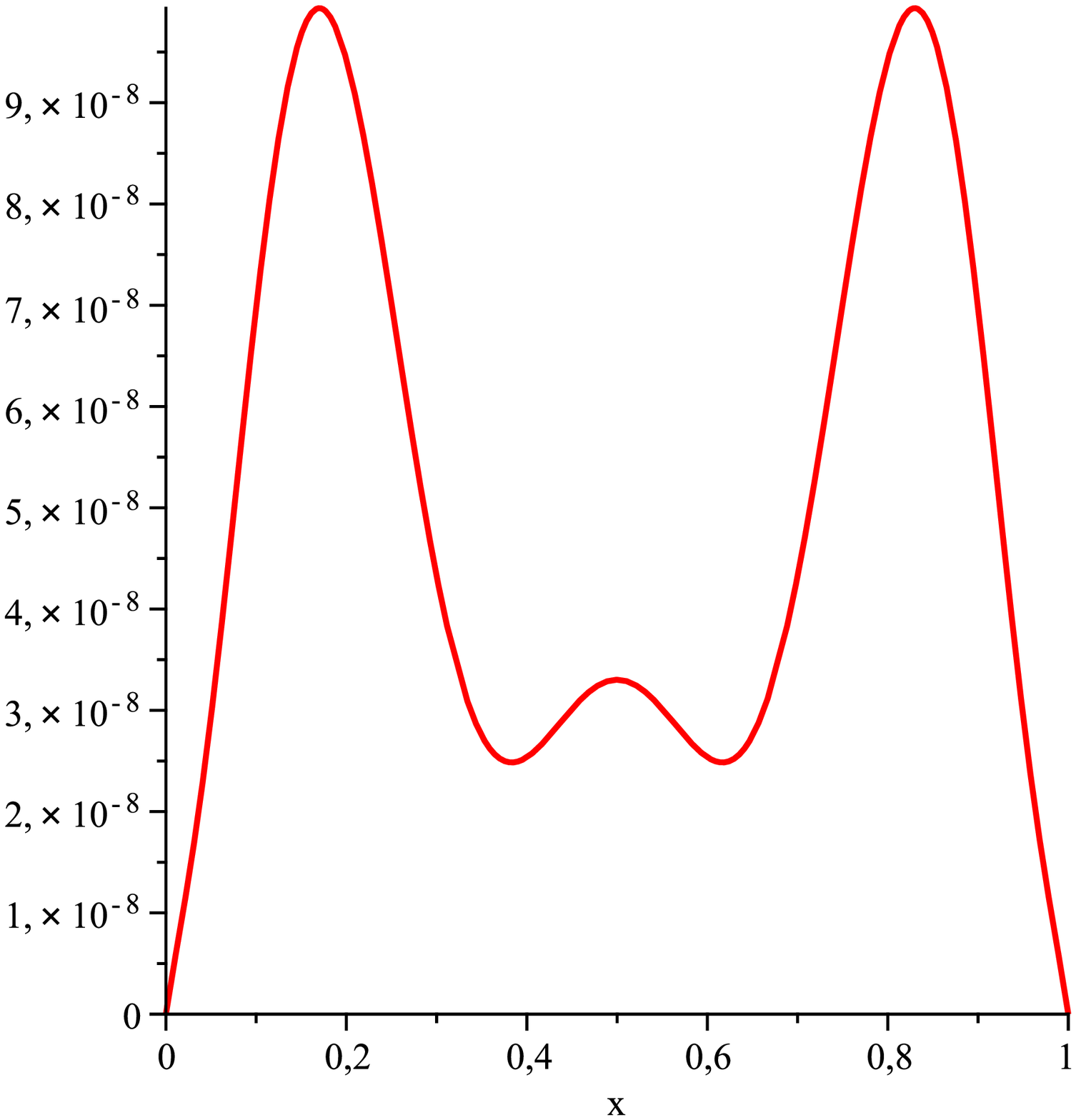}}
\subfloat[]{\label{figure1c}\includegraphics[width=0.33\textwidth]{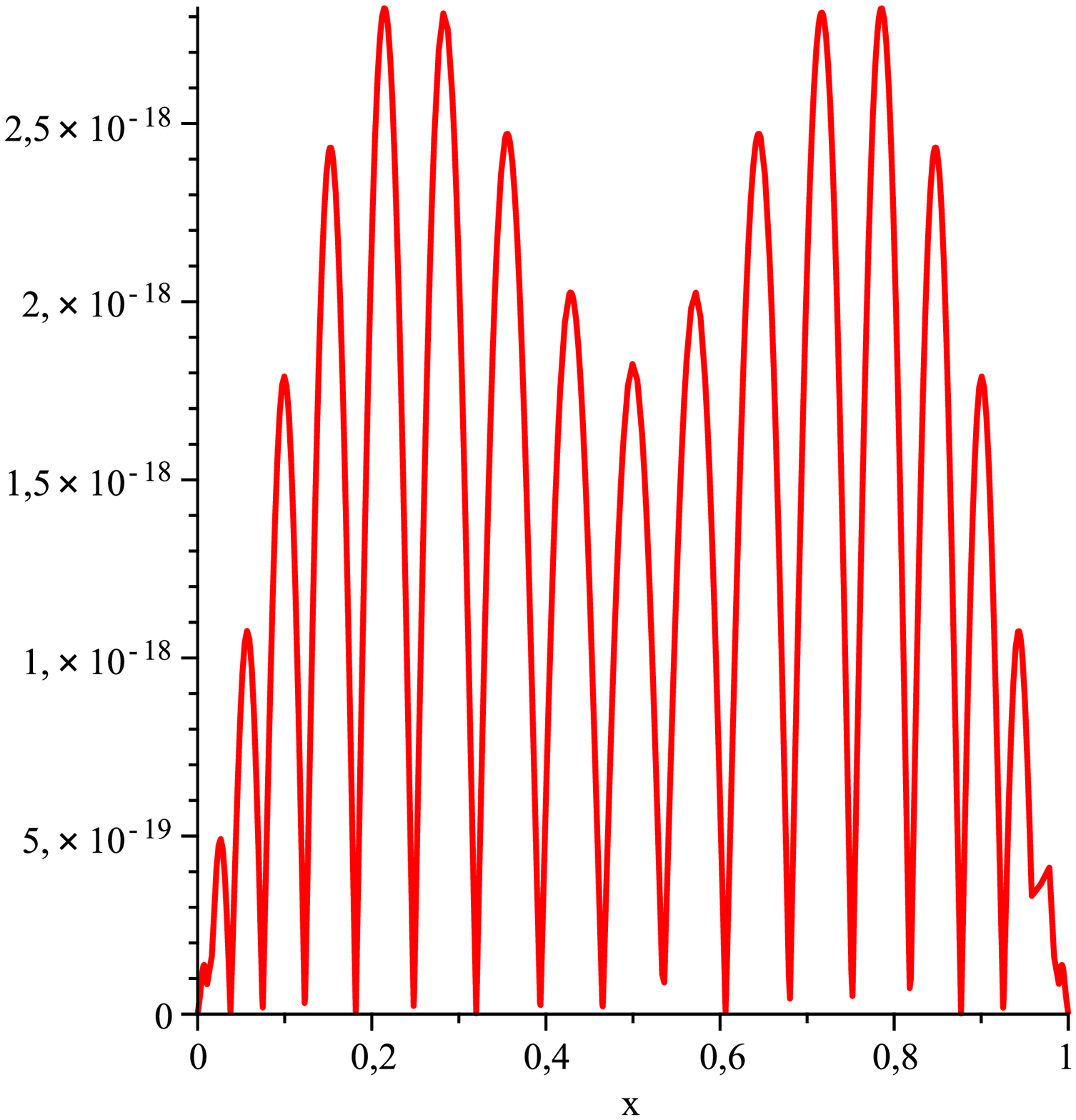}}
\end{tabular}
    \caption{Plots of (a) $\varepsilon_3$, (b) $\varepsilon_8$, and (c) $\varepsilon_{20}$ for Example \ref{Ex:1}.}\label{figure1}
\end{center}
\end{figure}
\end{example}

\begin{example}\label{Ex:2}
Now, let us solve the following fourth order linear differential equation:
\begin{equation*}
y^{(4)}(x) = -2y''(x) - y(x) \qquad  (0 \le x \le 1)
\end{equation*}
with the boundary conditions
\begin{align*}
&y(0) = 3, \quad y'(0) = 3,\\
&y(1) = 0, \quad y'(1) = 0.
\end{align*}
One can check that the exact solution is
\begin{equation*}
y(x) = \frac{3}{2}\sec^2(1)[(4- 3x)\sin(x) -x\sin(2-x) - (3x-1)\cos(x) + (x+1)\cos(2-x)]
\end{equation*}
(cf.~\cite[\S4.3]{ZC08}). The maximum errors for our approximate solutions $w_n$ $(n=4,5,\ldots,20)$ are shown in Table \ref{tab:table1}.
Plots of the selected error functions \eqref{E:errfun} can be seen in Fig.~\ref{figure2}.

\begin{figure}[H]
\captionsetup{margin=0pt, font={scriptsize}}
\begin{center}
\setlength{\tabcolsep}{0mm}
\begin{tabular}{c}
\subfloat[]{\label{figure2a}\includegraphics[width=0.33\textwidth]{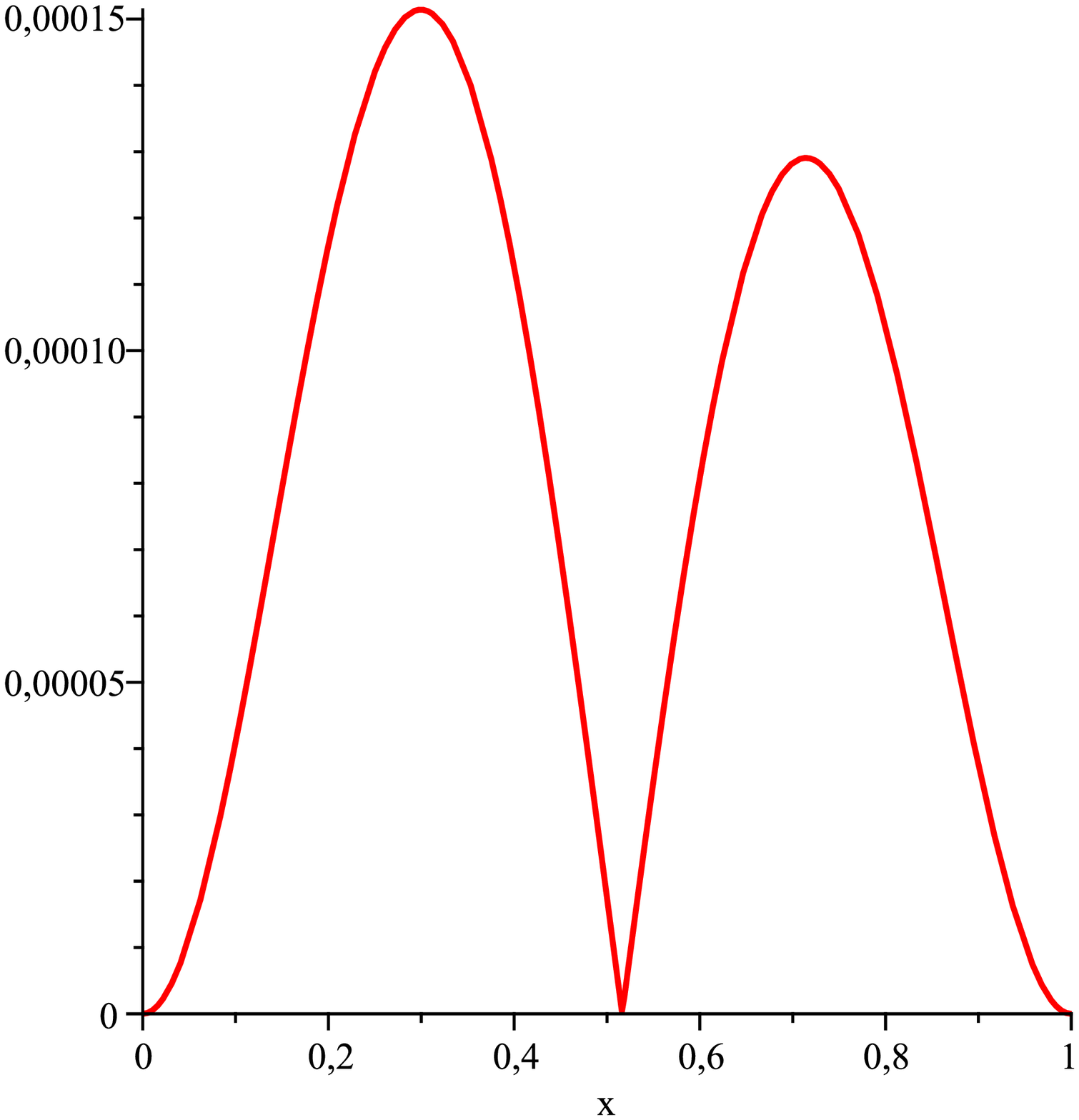}}
\subfloat[]{\label{figure2b}\includegraphics[width=0.33\textwidth]{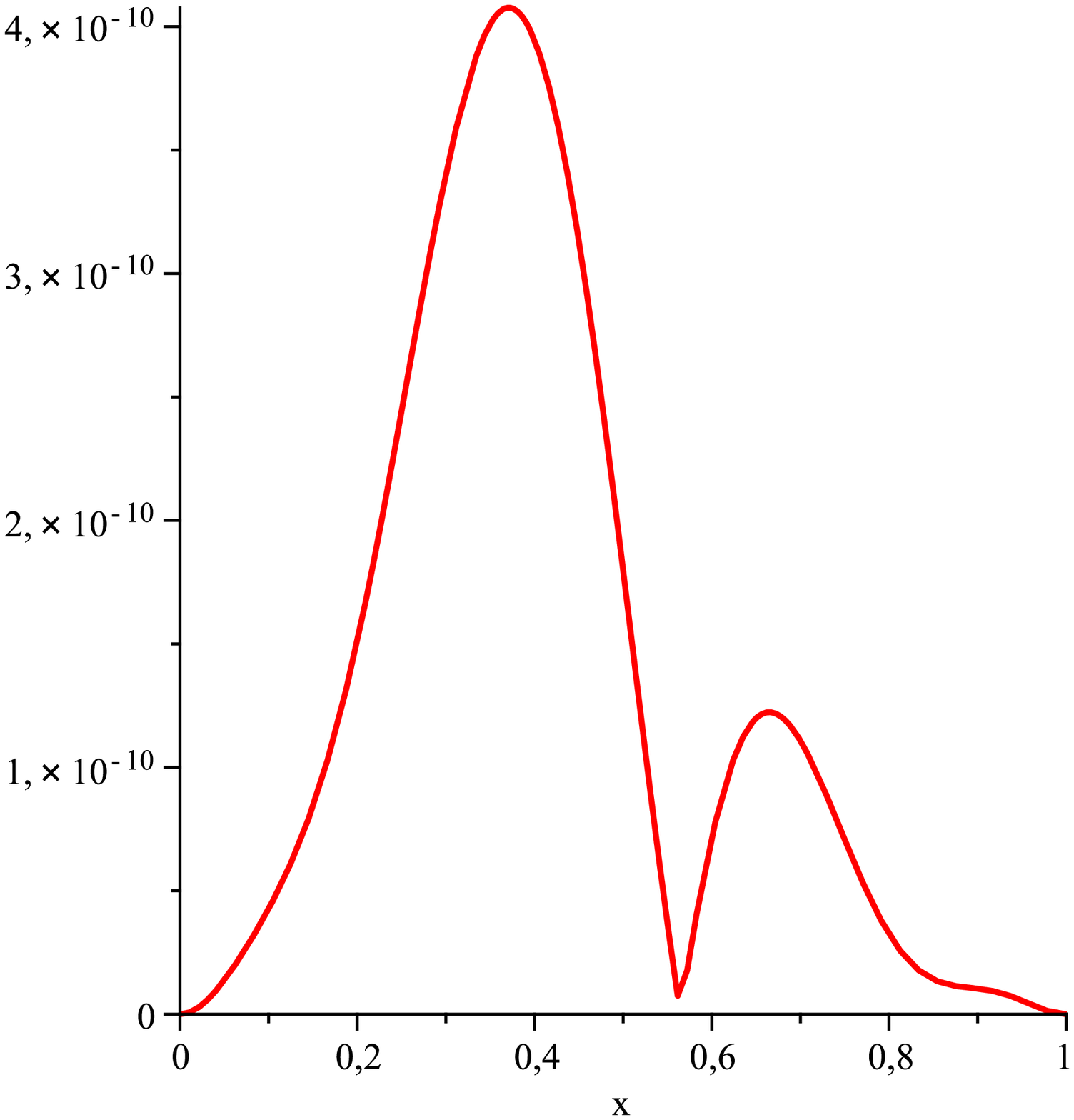}}
\subfloat[]{\label{figure2c}\includegraphics[width=0.33\textwidth]{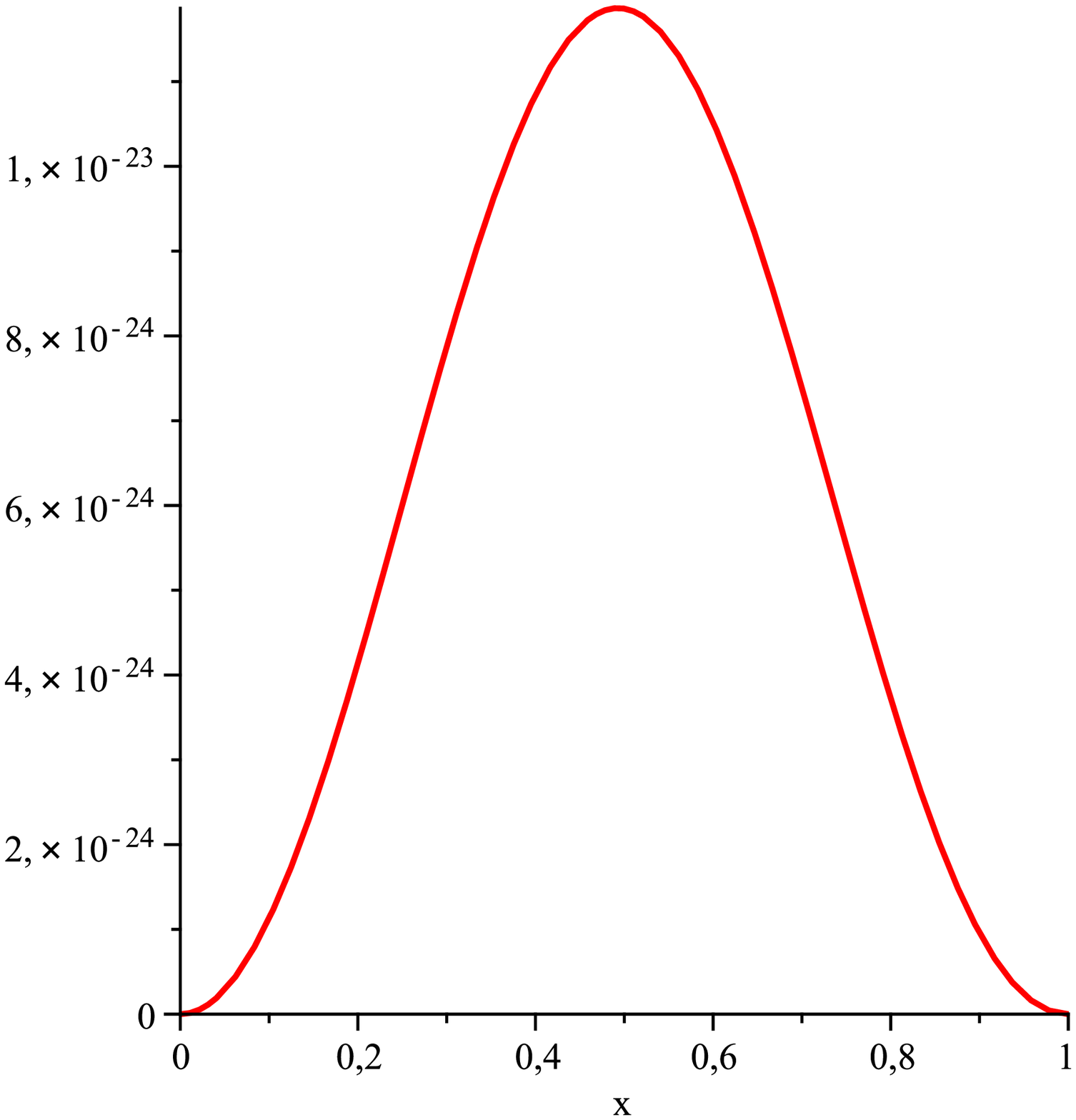}}
\end{tabular}
    \caption{Plots of (a) $\varepsilon_6$, (b) $\varepsilon_{10}$, and (c) $\varepsilon_{20}$ for Example \ref{Ex:2}.}\label{figure2}
\end{center}
\end{figure}
\end{example}

\begin{example}\label{Ex:3}
The following fourth order nonlinear differential equation:
\begin{equation*}
y^{(4)}(x) = \frac{[y'''(x)]^2}{y''(x)} \qquad  (0 \le x \le 1)
\end{equation*}
with the conditions
\begin{equation*}
y(0) = 2, \quad y'(0) = -1, \quad y''(0) = 3, \quad y'''(0) = 1
\end{equation*}
has the exact solution
\begin{equation*}
y(x) = -25 -10x + 27e^{\frac{x}{3}}
\end{equation*}
(cf.~\cite[\S4.2.1.18]{PV95}).
Let us consider the approximate solutions $w_n$ $(n=4,5,\ldots,20)$. Selected error functions \eqref{E:errfun}
are shown in Fig.~\ref{figure3}. The list of maximum errors \eqref{E:maxErr} is given in Table \ref{tab:table1}.

\begin{figure}[H]
\captionsetup{margin=0pt, font={scriptsize}}
\begin{center}
\setlength{\tabcolsep}{0mm}
\begin{tabular}{c}
\subfloat[]{\label{figure3a}\includegraphics[width=0.33\textwidth]{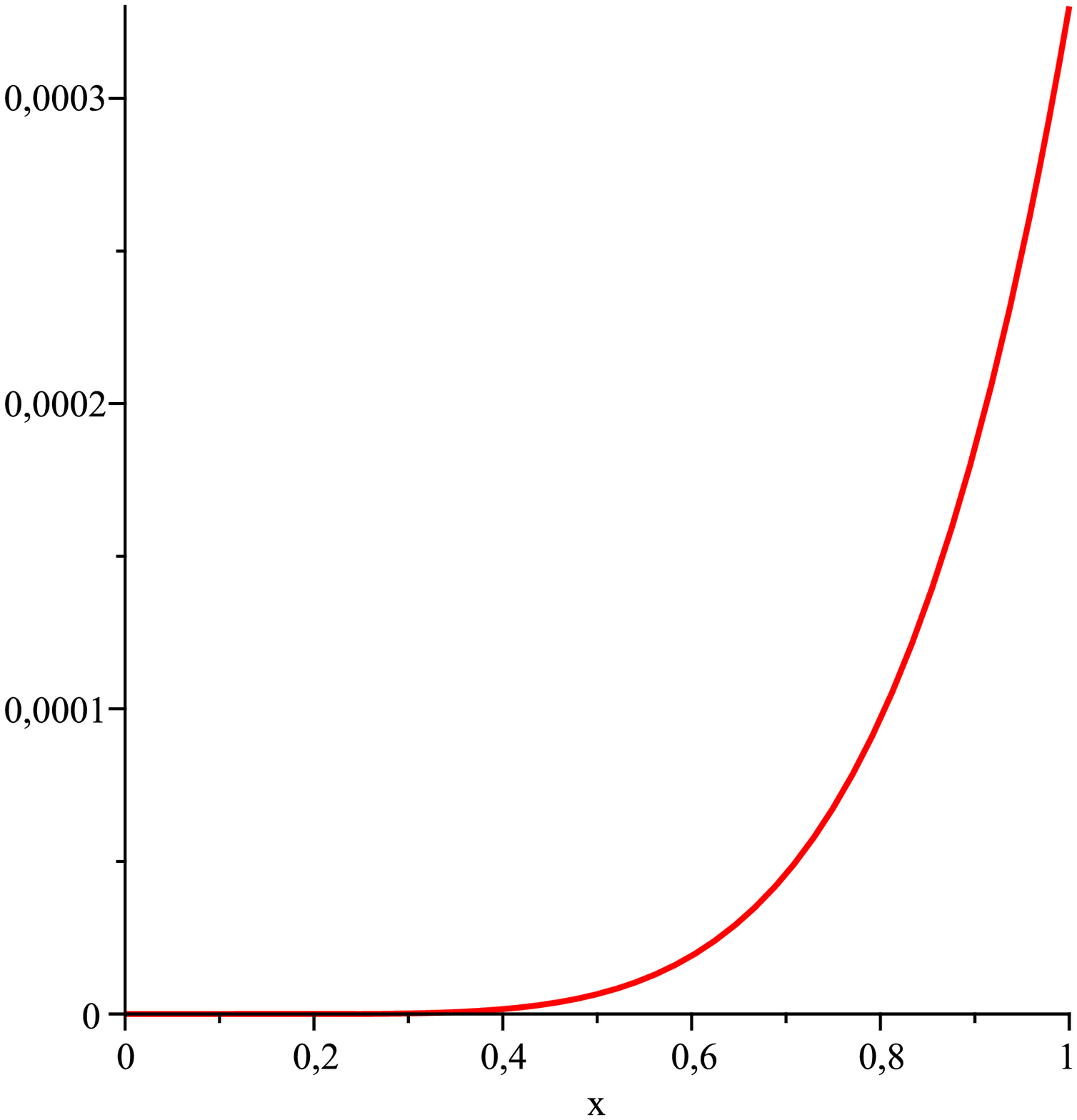}}
\subfloat[]{\label{figure3b}\includegraphics[width=0.33\textwidth]{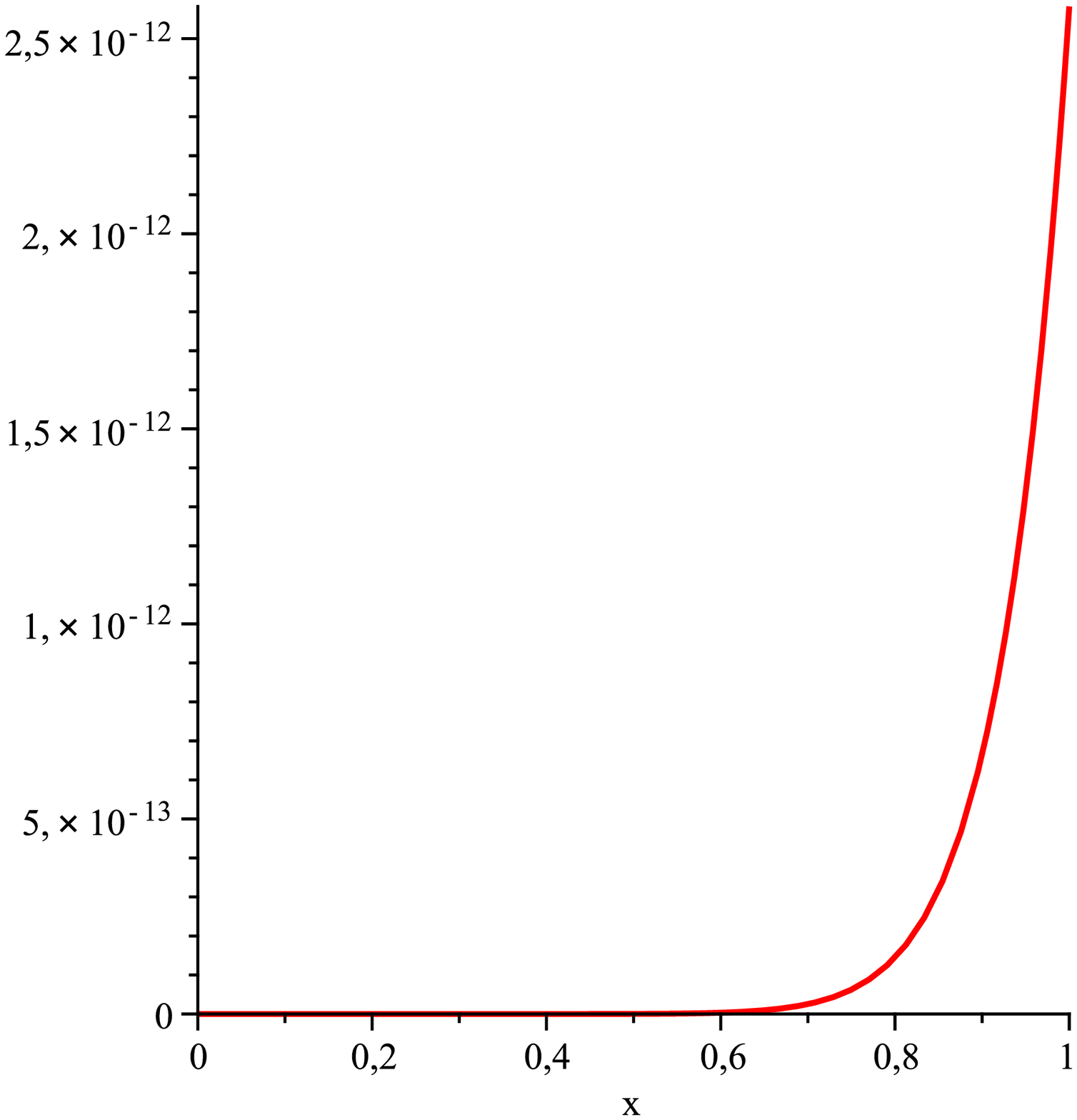}}
\subfloat[]{\label{figure3c}\includegraphics[width=0.33\textwidth]{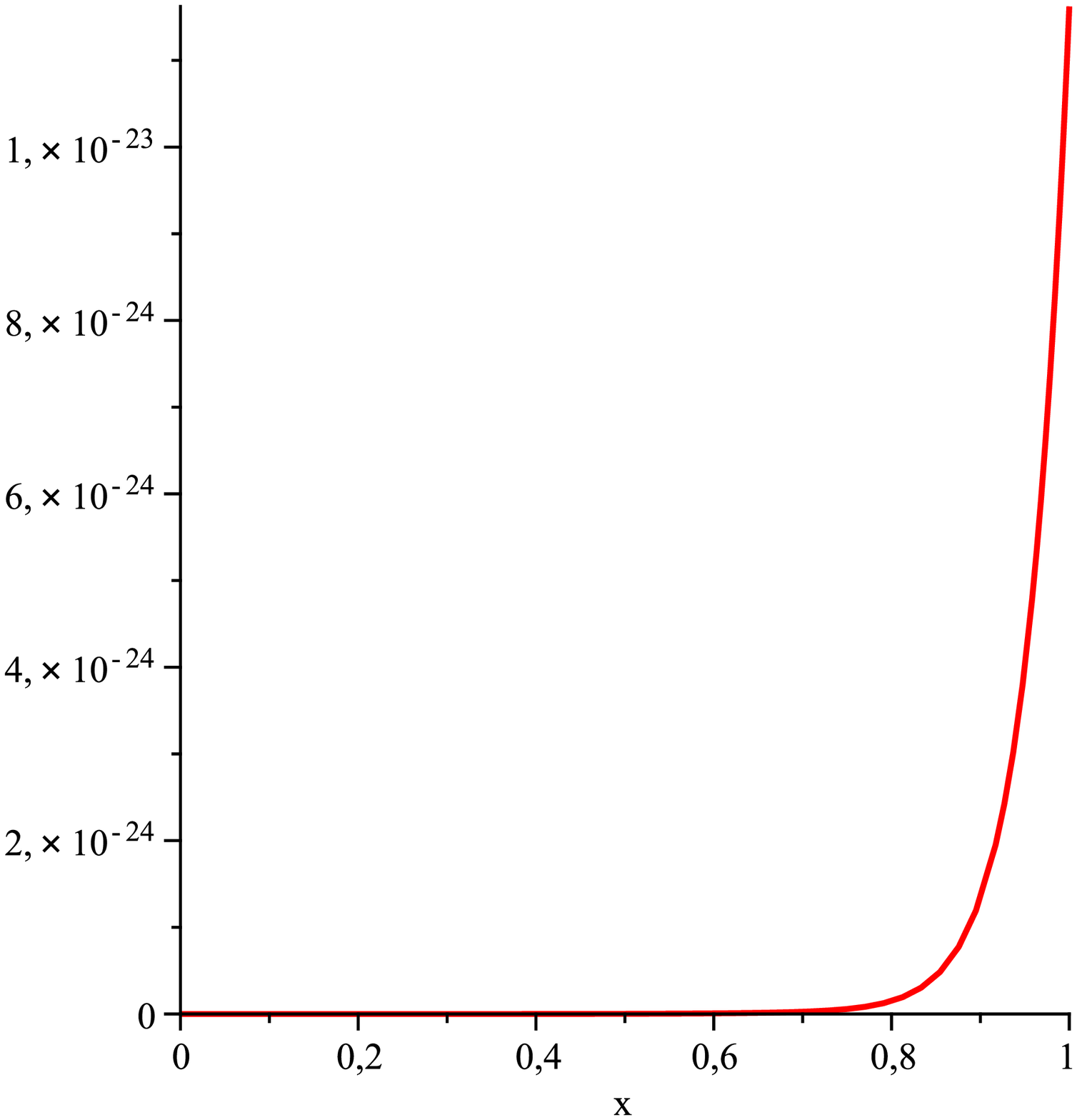}}
\end{tabular}
    \caption{Plots of (a) $\varepsilon_5$, (b) $\varepsilon_{12}$, and (c) $\varepsilon_{20}$ for Example \ref{Ex:3}.}\label{figure3}
\end{center}
\end{figure}
\end{example}

\begin{example}\label{Ex:4}
Next, we apply the algorithm to the following third order linear differential equation:
\begin{equation*}
y'''(x) = 4xy'(x) + 2y(x) \qquad  (0 \le x \le 1)
\end{equation*}
with the conditions
\begin{equation*}
y(0) = 1, \quad y'(0) = 0, \quad y(1) = 0.
\end{equation*}
The exact solution is
\begin{equation*}
y(x) = c_1\Ai^2(x) + c_2\Ai(x)\Bi(x) + c_3\Bi^2(x),
\end{equation*}
where
\begin{equation*}
c_1 := -3\Ai(1)\Bi(1)c_4,\quad
c_2 := \left[3\Ai^2(1)+\Bi^2(1)\right]c_4,\quad
c_3 := - \Ai(1)\Bi(1)c_4
\end{equation*}
with
$$
c_4 := \frac{3^{\frac{5}{6}}\Gamma^2(\frac{2}{3})}{3\Ai^2(1) + \Bi^2(1) -2\sqrt{3}\Ai(1)\Bi(1)}
$$
(cf.~\cite[\S10.4.57]{AS72}). Here $\Ai$ and $\Bi$ are \textit{Airy functions of the first and second kind}, respectively (for details, see, e.g.,
\cite[\S10.4]{AS72}), and $\Gamma$ is the \textit{gamma function} (see, e.g., \cite[\S6.1]{AS72}). Values of the Airy functions were computed using
\texttt{AiryAi} and \texttt{AiryBi} procedures provided by Maple{\small \texttrademark}13. Let us consider the approximate solutions
$w_n$ $(n=3,4,\ldots,20)$. For the plots of the error functions $\varepsilon_6$, $\varepsilon_{10}$ and $\varepsilon_{20}$, see Fig.~\ref{figure4}.
Maximum errors \eqref{E:maxErr} are listed in Table \ref{tab:table1}.

\begin{figure}[H]
\captionsetup{margin=0pt, font={scriptsize}}
\begin{center}
\setlength{\tabcolsep}{0mm}
\begin{tabular}{c}
\subfloat[]{\label{figure4a}\includegraphics[width=0.33\textwidth]{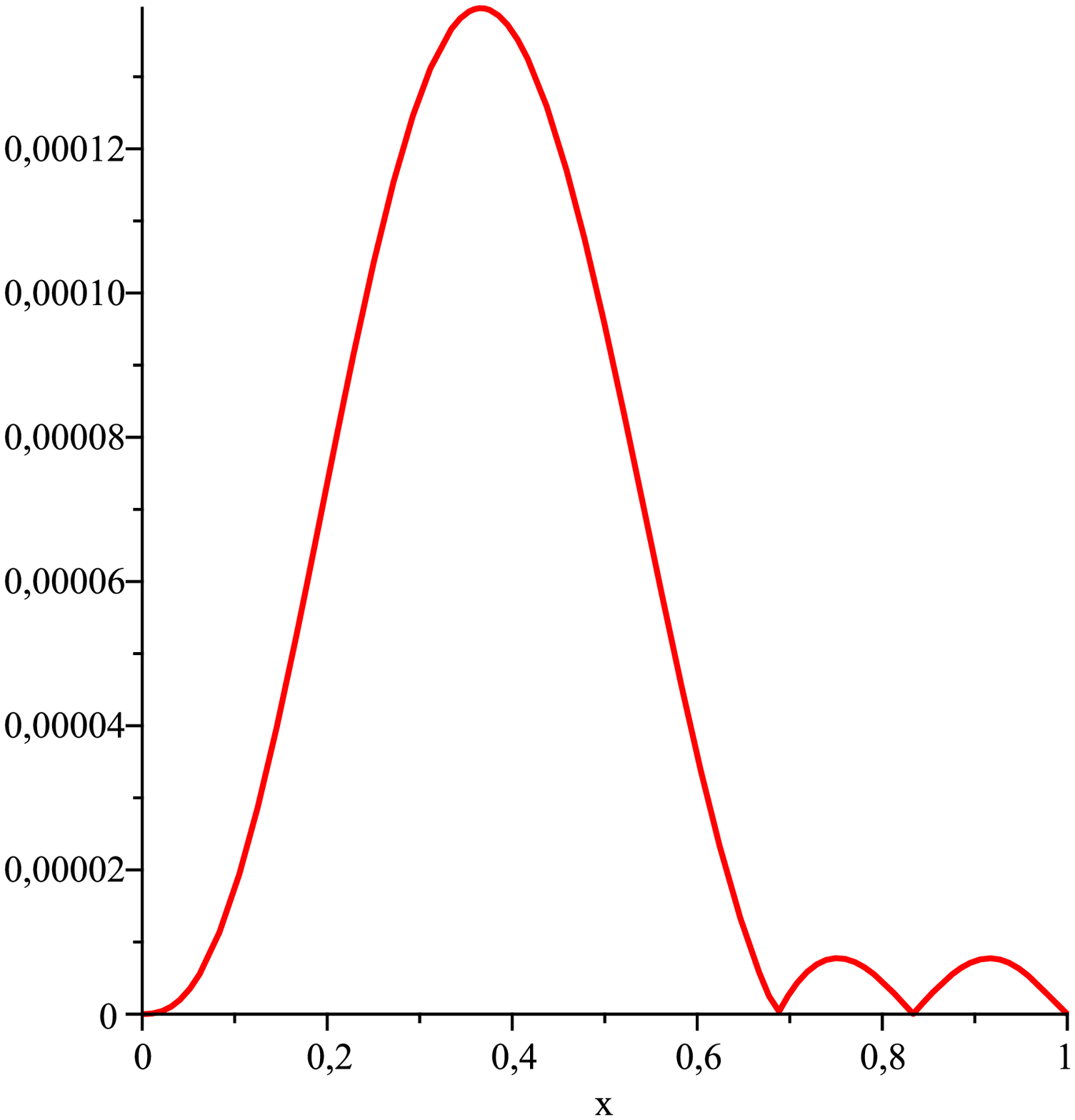}}
\subfloat[]{\label{figure4b}\includegraphics[width=0.33\textwidth]{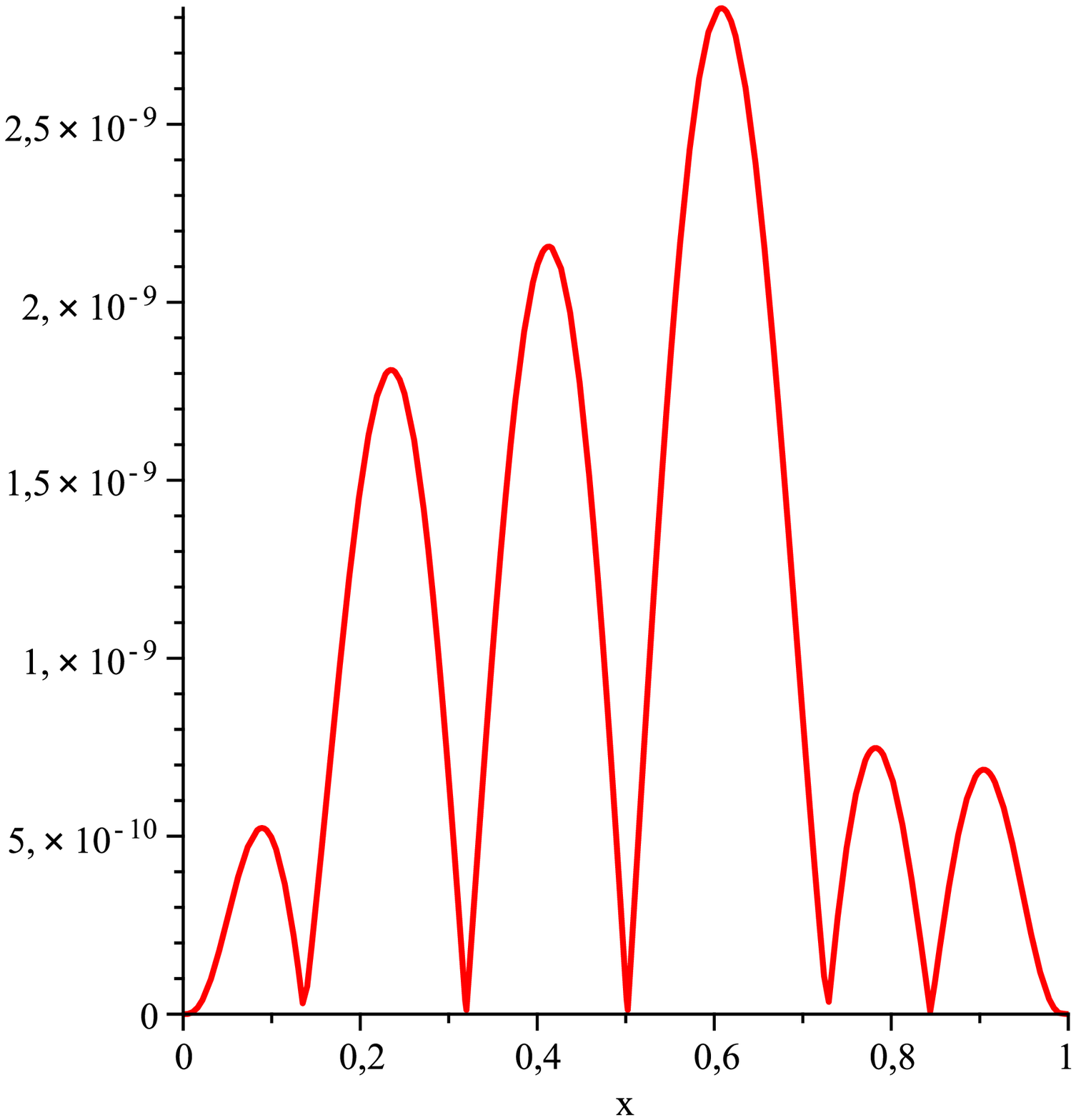}}
\subfloat[]{\label{figure4c}\includegraphics[width=0.33\textwidth]{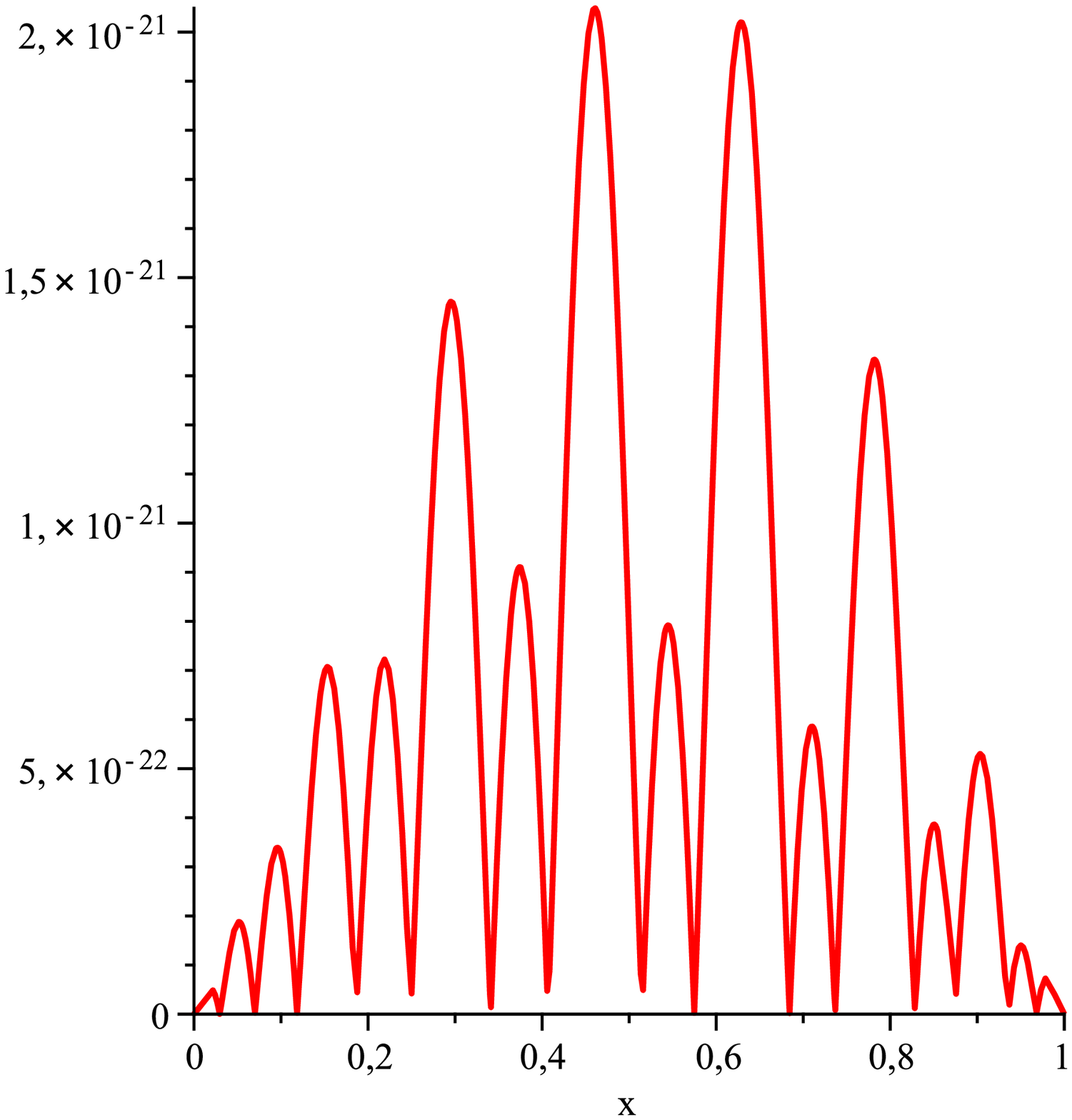}}
\end{tabular}
    \caption{Plots of (a) $\varepsilon_6$, (b) $\varepsilon_{10}$, and (c) $\varepsilon_{20}$ for Example \ref{Ex:4}.}\label{figure4}
\end{center}
\end{figure}
\end{example}

\begin{example}\label{Ex:5}
Finally, let us consider the following second order linear differential equation:
\begin{equation*}
y''(x) = -(x+2)^2y(x) \qquad  (0 \le x \le 1)
\end{equation*}
subject to the boundary conditions
\begin{equation*}
y(0) = \sqrt{2}\left[J_{\frac{1}{4}}(2)+Y_{\frac{1}{4}}(2)\right],\quad
y'(0) = 2\sqrt{2}\left[J_{-\frac{3}{4}}(2)+Y_{-\frac{3}{4}}(2)\right]
\end{equation*}
with the exact solution
\begin{equation*}
y(x) = \sqrt{x+2}\left[J_{\frac{1}{4}}\left(\frac{1}{2}(x+2)^2\right) + Y_{\frac{1}{4}}\left(\frac{1}{2}(x+2)^2\right)\right]
\end{equation*}
(cf.~\cite[\S8.49]{GR07}).
Here $J_{\frac{1}{4}}$ and $Y_{\frac{1}{4}}$ are \textit{Bessel functions of order $\frac{1}{4}$ of the first and second kind},
respectively (for details, see, e.g., \cite[\S2.1.2.121]{PV95}). Values of the Bessel functions were computed using \texttt{BesselJ} and \texttt{BesselY}
procedures provided by Maple{\small \texttrademark}13. Maximum errors \eqref{E:maxErr}, and plots of the selected error functions \eqref{E:errfun} for our
approximate solutions $w_n$ $(n=2,3,\ldots,20)$ can be found in Table \ref{tab:table1} and Fig.~\ref{figure5}, respectively.

\begin{figure}[H]
\captionsetup{margin=0pt, font={scriptsize}}
\begin{center}
\setlength{\tabcolsep}{0mm}
\begin{tabular}{c}
\subfloat[]{\label{figure5a}\includegraphics[width=0.33\textwidth]{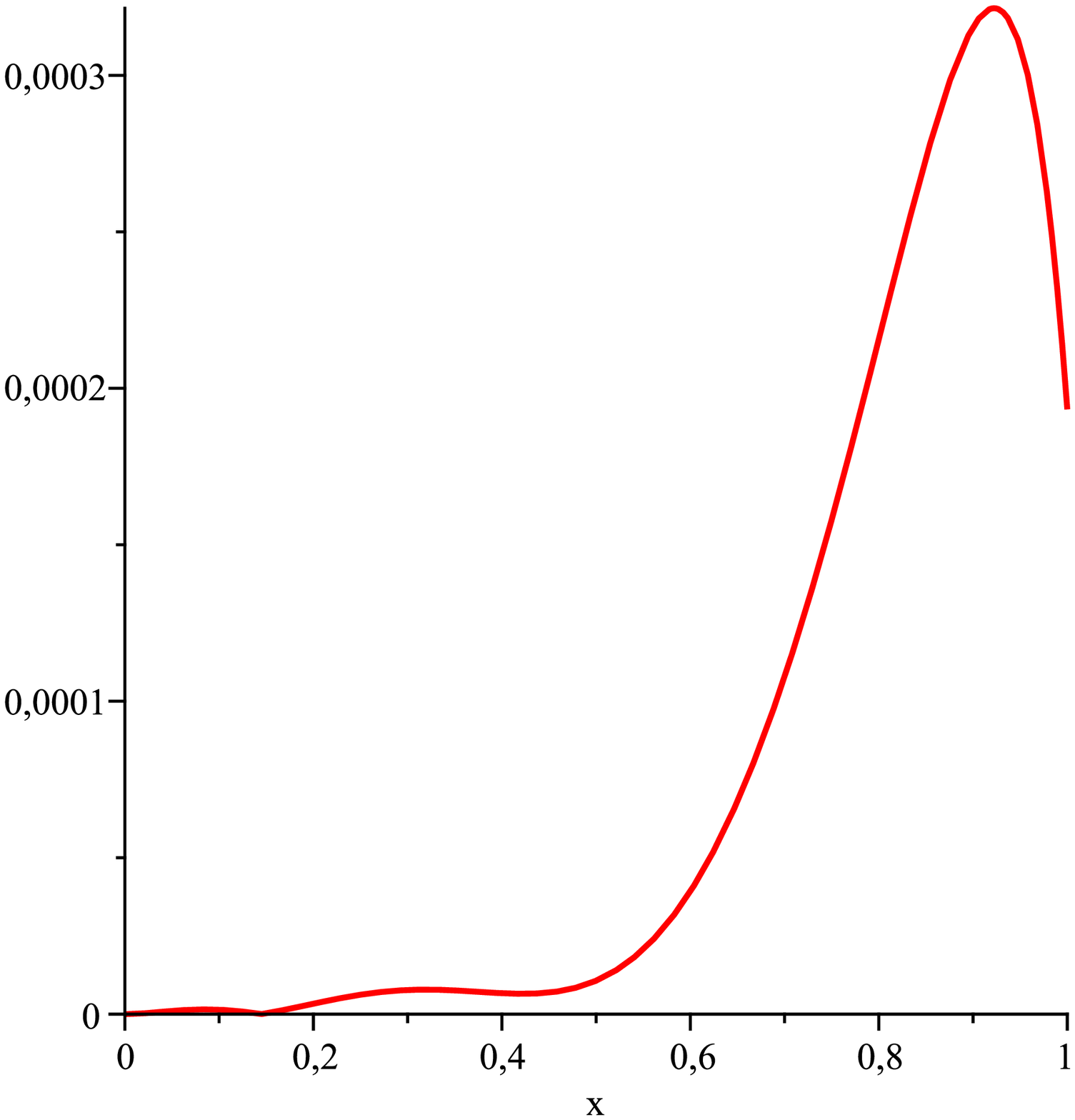}}
\subfloat[]{\label{figure5b}\includegraphics[width=0.33\textwidth]{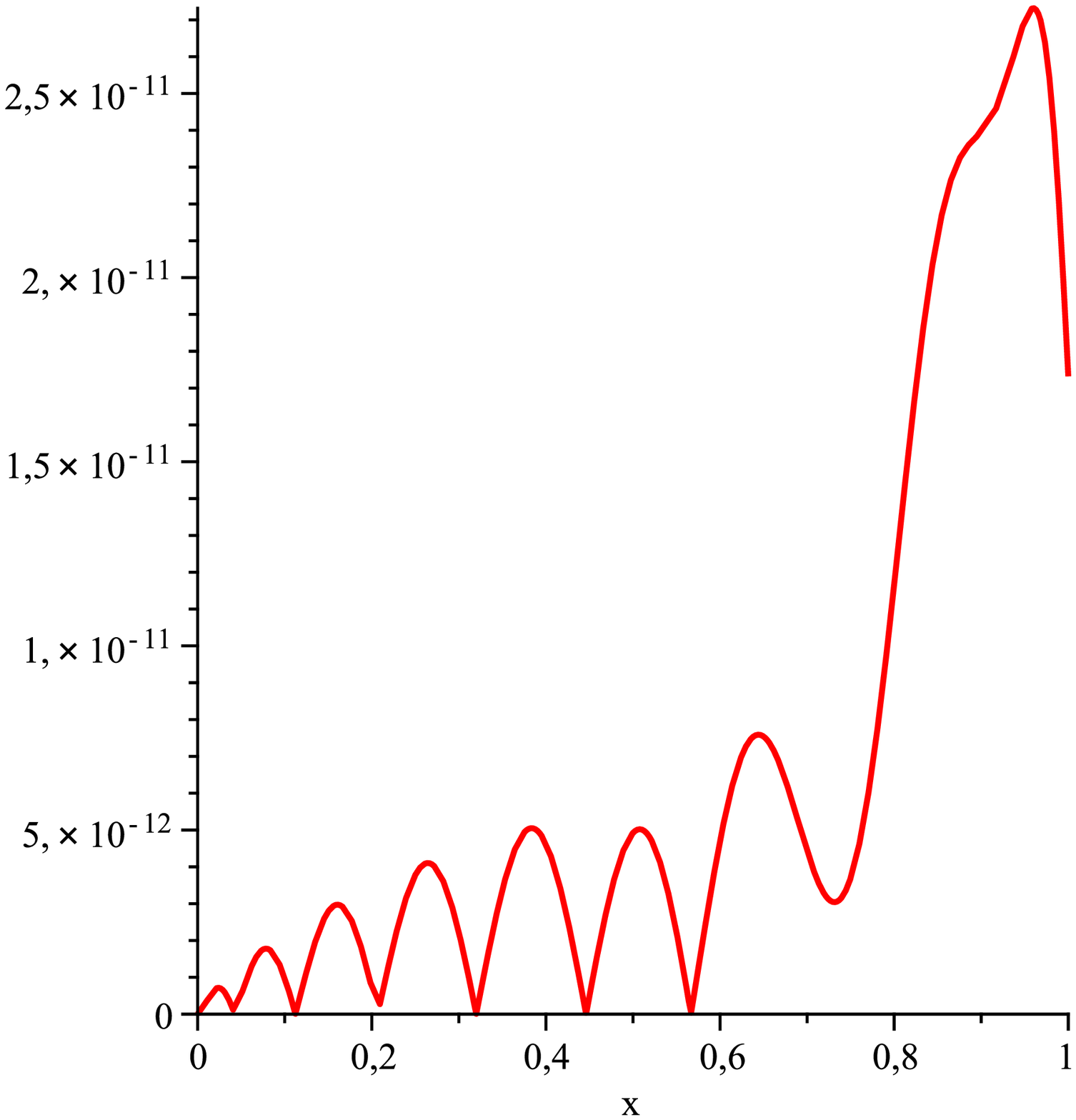}}
\subfloat[]{\label{figure5c}\includegraphics[width=0.33\textwidth]{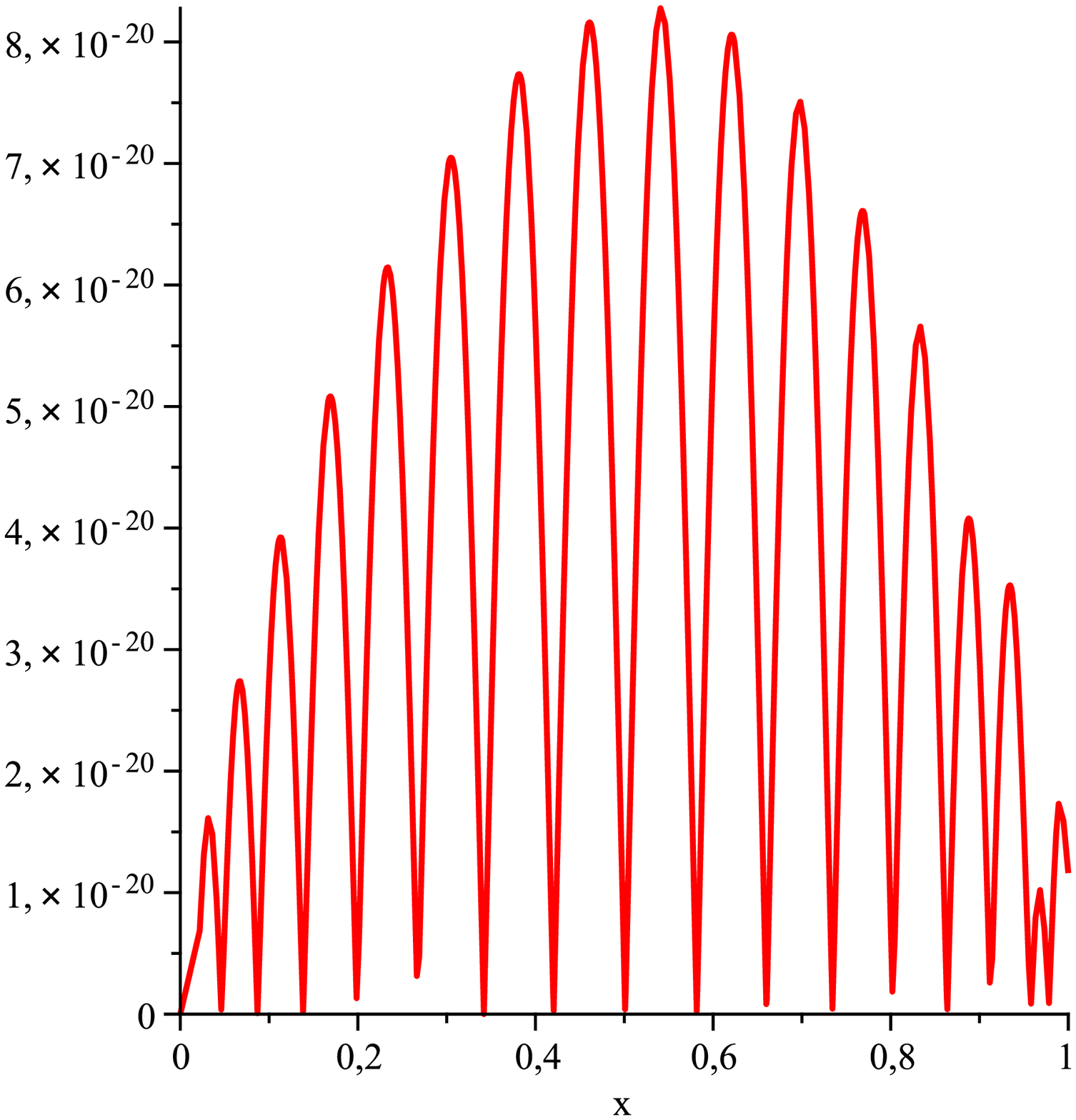}}
\end{tabular}
    \caption{Plots of (a) $\varepsilon_7$, (b) $\varepsilon_{13}$, and (c) $\varepsilon_{20}$ for Example \ref{Ex:5}.}\label{figure5}
\end{center}
\end{figure}
\end{example}

\begin{table}[H]
\captionsetup{margin=0pt, font={scriptsize}}
\ra{1.3}
\begin{center}
  \begin{tabular}{lccccc}
    \hline
    $n$ & Example \ref{Ex:1} & Example \ref{Ex:2} & Example \ref{Ex:3} & Example \ref{Ex:4} & Example \ref{Ex:5}\\
     \hline
    $2$ & $5.58e{-}3$      & -----           & -----           & -----           & $1.48e{+}0$ \\
    $3$ & $4.83e{-}3$      & -----           & -----           & $3.40e{-}2$     & $5.56e{-}1$\\
    $4$ & $5.28e{-}4$      & $8.11e{-}3$     & $2.88e{-}3$     & $1.03e{-}2$     & $1.94e{-}1$\\
    $5$ & $7.90e{-}5$      & $4.32e{-}4$     & $3.30e{-}4$     & $1.64e{-}3$     & $9.60e{-}2$\\
    $6$ & $4.98e{-}6$      & $1.51e{-}4$     & $3.30e{-}5$     & $1.40e{-}4$     & $9.18e{-}3$\\
    $7$ & $1.56e{-}6$      & $4.21e{-}6$     & $2.85e{-}6$     & $6.81e{-}6$     & $3.21e{-}4$\\
    $8$ & $9.93e{-}8$      & $3.55e{-}7$     & $2.17e{-}7$     & $5.88e{-}7$     & $1.06e{-}4$\\
    $9$ & $2.05e{-}8$      & $9.85e{-}9$     & $1.47e{-}8$     & $4.44e{-}8$     & $1.15e{-}5$\\
    $10$ & $1.19e{-}9$     & $4.08e{-}10$    & $9.01e{-}10$    & $2.83e{-}9$     & $8.50e{-}7$\\
    $11$ & $4.56e{-}10$    & $1.29e{-}11$    & $5.03e{-}11$    & $1.89e{-}10$    & $4.59e{-}8$\\
    $12$ & $1.27e{-}11$    & $5.34e{-}13$    & $2.58e{-}12$    & $1.78e{-}11$    & $1.52e{-}9$\\
    $13$ & $9.58e{-}12$    & $2.21e{-}14$    & $1.23e{-}13$    & $9.10e{-}13$    & $2.73e{-}11$\\
    $14$ & $2.82e{-}13$    & $1.04e{-}15$    & $5.42e{-}15$    & $5.82e{-}14$    & $5.76e{-}12$\\
    $15$ & $2.14e{-}13$    & $4.97e{-}17$    & $2.24e{-}16$    & $4.63e{-}15$    & $3.96e{-}13$\\
    $16$ & $5.69e{-}15$    & $2.41e{-}18$    & $8.71e{-}18$    & $2.18e{-}16$    & $1.65e{-}14$\\
    $17$ & $5.00e{-}15$    & $1.18e{-}19$    & $3.19e{-}19$    & $1.23e{-}17$    & $4.59e{-}16$\\
    $18$ & $1.24e{-}16$    & $5.73e{-}21$    & $1.11e{-}20$    & $8.66e{-}19$    & $1.42e{-}17$\\
    $19$ & $1.19e{-}16$    & $2.79e{-}22$    & $3.64e{-}22$    & $3.95e{-}20$    & $3.45e{-}19$\\
    $20$ & $2.82e{-}18$    & $1.19e{-}23$    & $1.16e{-}23$    & $2.05e{-}21$    & $8.27e{-}20$\\
    \hline
  \end{tabular}
\end{center}
\caption{Maximum errors $E_n$ of the approximate solutions $w_n$ of the problems from Examples \ref{Ex:1}--\ref{Ex:5}.}
\label{tab:table1}
\end{table}

\section{Conclusions and future work}\label{S:Co}

We presented a new iterative approximate method of solving boundary value problems. In each iteration, the method computes efficiently an approximate polynomial solution in the Bernstein form using least squares approximation combined with some properties of dual Bernstein polynomials.
Both linear and nonlinear differential equations of any order can be solved. Examples confirmed that the method is versatile.
Some of them showed that the method can handle differential equations having quite complicated exact solutions written in terms of special functions,
such as Airy and Bessel functions.

In recent years, there has been a progress in the area of dual bases in general (see, e.g., \cite{Ker13,Woz13,Woz14}). Notice that
our idea could be generalized to work for any pair of dual bases, not only Bernstein and dual Bernstein bases. However, the algorithm would be much more
complicated. For example, dealing with the boundary conditions would be much more difficult, since we would not be able to use Lemma \ref{L:boundary}.
In the near future, we intend to study this generalization.
\bibliographystyle{plain}


\end{document}